\newtheorem{theorem}{\bfseries Theorem}[section]
\newtheorem{lemma}[theorem]{\bfseries Lemma}
\newtheorem{proposition}[theorem]{\bfseries Proposition}
\newtheorem{corollary}[theorem]{\bfseries Corollary}
\newtheorem*{theoreminv}{{\bfseries Theorem} \ref{thm:inv}}
\newtheorem*{theoremmax}{{\bfseries Theorem} \ref{thm:maxsplit}}
\newtheorem*{cor1}{{\bfseries Corollary} \ref{cor:maxHTV}}
\theoremstyle{definition}
\newcommand{\C}{\mathcal{C}}
\newcommand{\pt}{\otimes}
\newcommand{\m}{\mathcal}
\newcommand{\ov}[1]{\overline{#1}}
\newcommand{\fd}{\rightarrow}
\newcommand{\Cc}{\mathbb{C}}
\newcommand{\Bbbi}{\mathbbm{1}}
\newcommand{\id}{\textrm{id}}
\newcommand{\xd}[1]{#1^{\vee}}
\newcommand{\xg}[1]{{}^{\vee}#1}
\newcommand{\xdd}[1]{#1^{\vee \vee}}
\newcommand{\ap}{\mapsto}
\newcommand{\tr}{\mathrm{tr}}
\newcommand{\ho}[2]{Hom_{\C}(#1,#2)}
\newcommand{\lc}{\Lambda_{\mathcal{C}}}
\newcommand{\grad}{\Gamma_{\mathcal{C}}}
\newcommand{\cs}[1]{|#1|}
\def\commutatif{\ar@{}[dll]|{\circlearrowleft}}
\newcommand{\Zz}{\mathbb{Z}}
\newcommand{\dis}[1]{\displaystyle{#1}}
\newcommand{\dc}{\Delta_{\C}}
\newcommand{\im}{\textrm{im}}
\begin{document}

\markboth{J\'er\^ome Petit}
{The maximal decomposition of the Turaev-Viro TQFT.}

\title{The maximal decomposition of the Turaev-Viro TQFT}
\author{J\'er\^ome Petit}
\address{Department of Mathematics, Tokyo institute of technology, 2-12-1 Ookayama Meguro-ku, 152-8551 Tokyo, Japan\\ petit.j.aa@m.titech.ac.jp}
\keywords{ Quantum invariants, Turaev-Viro invariant, TQFTs, HQFTs.}
\subjclass[2000]{ 57N10, 18D10, 20J06}

\begin{abstract}
In \cite{HQFTp} we have built an homotopical Turaev-Viro invariant and an HQFT from the universal graduation of a spherical category. In the present paper, we show that every graduation $(G,p)$ of a spherical category $\C$ defines an homotopical Turaev-Viro invariant $HTV_{\C}^{(G,p)}$  and an HQFT $\m{H}_{\C}^{(G,p)}$. Furthermore we show that the Turaev-Viro TQFT will be split into blocks coming the HQFT $\m{H}_{\C}^{(G,p)}$. We show that this decomposition is maximal for the universal graduation of the category, which means that for every graduation $(G,p)$ the HQFT $\m{H}_{\C}^{(G,p)}$ is split into blocks coming from the HQFT obtained from the universal graduation.
\end{abstract}

\maketitle

\section{Introduction}

The Turaev-Viro invariant \cite{TV} is a \emph{quantum invariant} of 3-manifold with boundary. In the original construction, Turaev and Viro used the quantum group $U_q(\frak{sl}_2)$ to build this invariant. In \cite{BW} and \cite{GK}, the authors generalize the construction to spherical categories with invertible dimension. A spherical category is a semisimple sovereign category over a commutative ring $\Bbbk$ such that the left and right traces coincide. The dimension of a spherical category is the sum of squares of dimensions of simple objects. The Turaev-Viro invariant of a closed 3-manifold $M$ is a state-sum indexed by the colorings of a triangulation of $M$. The colorings of a triangulation $T$ are maps from the set of oriented 1-simplexes to the set of scalar objects (up to isomorphism) of a spherical category $\C$. The set of colorings of a triangulation $T$ is denoted $Col(T)$. The Turaev-Viro invariant is~:
$$
TV_{\C}(M)=\dc^{-n_0(T)}\sum_{c\in Col(T)}w_cW_c\in \Bbbk\, ,
$$
where $\dc$ is the dimension of the category, $n_0(T)$ is the number of 0-simplexes of $T$, $w_c$ is a scalar obtained from the coloring of the 1-simplexes and the trace of the category and $W_c$ is a scalar obtained from the 6j-symbols of the category.

The Turaev-Viro invariant extends to a \emph{Topological Quantum Field Theory} (TQFT) \cite{Tu} called \emph{Turaev-Viro TQFT}. In dimension 2+1, a TQFT assigns to every closed surface a finite dimensional vector space and to every cobordism a linear map. In \cite{HQFTp}, we show that the Turaev-Viro TQFT comes from an HQFT. \emph{An Homotopical Quantum Field Theory \cite{THQFT}} (HQFT) is a TQFT  for surfaces and cobordisms endowed with homotopy classes of continuous map to a \emph{target space} $X$. Roughly speaking, the Turaev-Viro TQFT splits into blocks coming from an HQFT \cite{HQFTp}. To obtain this decomposition, we assign to each spherical category $\C$ a group $\grad$ which comes from the universal graduation $(\grad,\cs{?})$ of the category. A graduation of a semisimple tensor category is a pair $(G,p)$ where $G$ is a group and $p$ is a map from $G$ to the set of isomorphisms classes of scalar objects such that $p(Z)=p(X)p(Y)$ if $Z$ is a scalar subobject of $X\pt Y$. Using the group $\grad$ we define an homotopical invariant $HTV_{\C}$ called \emph{the homotopical Turaev-Viro invariant}. This homotopical invariant will split the Turaev-Viro invariant. More precisely, we observe that every coloring $c$ of a triangulation $T$ of a closed 3-manifold $M$ leads to an homotopy class $x_c\in [M,B\grad]$, where $B\grad$ is the classifying space of the group $\grad$ and $[M,B\grad]$ is the set of homotopy classes of continuous map from $M$ to $B\grad$. These remarks lead to the following homotopical invariant of closed 3-manifolds~:

$$
HTV_{\C}(M,x)=\dc^{-n_0(T)}\sum_{\substack{c\in Col(T)\\x_c=x}}w_cW_c\, ,
$$
where $x\in [M,B\grad]$.   In \cite{HQFTp} , we show that the homotopical Turaev-Viro extends to an HQFT with target space $B\grad$ denoted $\m{H}_{\C}$ and we obtain the following decomposition of the Turaev-Viro TQFT $\m{V}_{\C}$:
$$
\m{V}_{\C}(\Sigma)=\bigoplus_{x\in [\Sigma, B\grad]}\m{H}_{\C}(\Sigma,x)\, ,
$$
for every closed and oriented surface $\Sigma$.

The motivation for this paper was to study the maximality of this decomposition. More precisely, we want to define other decomposition of the Turaev-Viro TQFT and compare them. To fulfill this objective, we will associate to every graduation of a spherical category an HQFT with target space the classifying space of the graduation. This HQFT will give the block of the decomposition of the Turaev-Viro TQFT and it will be an extension of an homotopical invariant associated to a graduation of $\C$. More precisely, for every graduation $(G,p)$ of $\C$ we build an homotopical Turaev-Viro invariant and this invariant will split the Turaev-Viro invariant:

\begin{theoreminv}
Let $\C$ be a spherical category with an invertible dimension, $M$ be 3-manifold, $\Sigma$ be the boundary of $M$ and $T_0$ be a triangulation of $\Sigma$. For every coloring $c_0\in Col(T_0)$ and for every homotopy class $x\in [M,BG]_{\Sigma,x_{c_0}}$, where $x_{c_0}\in [\Sigma,BG]$ is obtained from $c_0$, the vector :
$$HTV^{(G,p)}_{\C}(M,c_0,x)=\dc^{-n_0(T)+n_0(T_0)/2}\sum_{c\in Col_{c_0,x}(T)}w_cW_c \in V_{\C}(\Sigma,T_0,c_0)\,$$
is an invariant of the triple $(M,x,c_0)$. We have the following equality:
\begin{equation}\label{TVsplit}
TV_{\C}(M,c_0)=\sum_{x\in [M,BG]_{\Sigma,x_{c_0}}}HTV^{(G,p)}_{\C}(M,c_0,x)\, .
\end{equation}
\end{theoreminv}

Using the universal property of the universal graduation, we can compare the decompositions of the Turaev-Viro invariant obtained from a graduation $(G,p)$ and from the universal graduation. The universal property of the universal graduation induces a group morphism $f : \grad \fd G$. The group morphism $f$ induced a map $F : [M,B\grad] \fd [M,BG]$, using this map we show that for every graduation $(G,p)$ the homotopical Turaev-Viro invariant $HTV_{\C}^{(G,p)}$ comes from the homotopical Turaev-Viro invariant $HTV_{\C}^{(\grad,\cs{?})}$:

\begin{cor1}
Let $\C$ be a spherical category with an invertible dimension, $M$ be a 3-manifold, $\Sigma$ be the boundary of $M$ and $T_0$ be a triangulation of $\Sigma$. For every graduation $(G,p)$ of $\C$, one gets:
$$
TV_{C}(M,c_0)=\sum_{x\in [M,BG]}HTV_{\C}^{(G,p)}(M,x,c_0)\in V_{\C}(\Sigma,T_0,c_0)\,
$$
with $c_0\in Col(T_0)$, and
$$
HTV_{\C}^{(G,p)}(M,x,c_0)=\sum_{y\in F^{-1}(x)}HTV^{(\grad,\cs{?})}(M,y,c_0)\,,
$$
where $F$ is the map induced by the universal graduation $(\grad,\cs{?})$.
\end{cor1}

For every graduation $(G,p)$ of $\C$, we prove that the homotopical invariant $HTV_{\C}^{(G,p)}$ extends to an HQFT $\m{H}_{\C}^{(G,p)}$ with target space $BG$ and we show that for every graduation $(G,p)$ the HQFT  $\m{H}_{\C}^{(G,p)}$ will split the Turaev-Viro TQFT:
$$
\m{V}_{\C}(\Sigma)=\bigoplus_{x\in [\Sigma,BG]}\m{H}^{(G,p)}_{\C}(\Sigma)\, ,
$$
for every closed and oriented surface $\Sigma$. Using Theorem \ref{thm:inv} and Corollary \ref{cor:maxHTV}, we show that the decomposition of the Turaev-Viro TQFT given by the universal graduation is maximal:

\begin{theoremmax}
Let $\C$ be  a spherical category, $(G,p)$ be a graduation of $\C$. The Turaev-Viro HQFT obtained from the graduation $(G,p)$ is decomposed in the following way:
$$
\m{V}^{(G,p)}_{\C}(\Sigma,x)=\bigoplus_{y\in F^{-1}(x)}\m{V}^{(\grad,\cs{?})}_{\C}(\Sigma,y)\, ,
$$
for every closed surface $\Sigma$, and for every $x\in [\Sigma,BG]$, the map $F : [\Sigma,B\grad]\fd [\Sigma,BG]$ is the map obtained from the universal graduation (Lemma \ref{lem:mapgrad}).
\end{theoremmax}

The rest of the paper is organized as follows. In Section \ref{sec:category}, we review several facts about
monoidal categories and we define the universal graduation of semisimple tensor categories. In Section \ref{sec:TVconstruc}, we recall the construction of the Turaev-Viro invariant. In Section \ref{sec:TVhom}, we will build an homotopical Turaev-Viro invariant for every graduation $(G,p)$ of  a spherical category $\C$. Furthermore we show that the  homotopical Turaev-Viro invariant $HTV_{\C}^{(G,p)}$ splits the Turaev-Viro invariant (Theorem \ref{thm:inv}). In Section \ref{sec:TVhommax}, we compare the different splittings of the Turaev-Viro invariant. We show that The Turaev-Viro invariant and the homotopical Turaev-Viro invariant are obtained from the homotopical Turaev-Viro invariant $HTV_{\C}^{(\grad,\cs{?})}$ (Corollary \ref{cor:maxHTV}). In Section \ref{sec:TVHQFT}, we extend the homotopical invariant build forma graduation to an HQFT. The target of this HQFT will be the classifying space of the graduation. In Section \ref{sec:TVHQFTmax}, we prove Theorem \ref{thm:maxsplit}, it follows that the Turaev-Viro TQFT and the Turaev-Viro HQFT obtained from any graduation of $\C$ are decomposed into blocks which come from the Turaev-Viro HQFT obtained for the universal graduation.

\subsection*{Notations and conventions}
Throughout this paper, $\Bbbk$ will be a commutative, algebraically closed and characteristic zero field. Unless otherwise specified, categories are assumed to be small and monoidal categories are assumed to be strict and spherical categories are assumed to be strict.

Throughout this paper, we use the following notation. For an oriented manifold $M$, we denote by $\overline{M}$ the same manifold with the opposite orientation.

\section{Graduations of tensor categories}\label{sec:category}

In the present Section, we review a few general facts about categories with structure, which we use intensively throughout this text.

Let $\C$ be a monoidal category. A \emph{duality} of $\C$ is a data $(X,Y,e,h)$, where $X$ and $Y$ are objects of $\C$ and $e : X\pt Y \fd \Bbbi$ (\emph{evaluation}) and  $h : \Bbbi \fd Y\pt X$ (\emph{coevaluation}) are morphisms of $\C$, satisfying:
$$
(e\pt \id_X)(\id_X\pt h)=\id_X\qquad \mbox{and}\qquad (\id_Y\pt e)(h\pt \id_Y)=\id_Y\,.
$$
If $(X,Y,e,h)$ is a duality, we say that $(Y,e,h)$ is \emph{a right dual of $X$}, and $(X,e,h)$ is \emph{a left dual of $Y$}. If a right or left dual of an object exists, it is unique up to unique isomorphism.

A \emph{right autonomous} (resp. \emph{left autonomous}, resp. \emph{autonomous}) category is a monoidal category for which every object admits a right dual (resp. a  left dual, resp. both a left and a right dual).

If $\C$ has right duals, we may pick a right dual $(\xd{X},e_X,h_X)$ for each object $X$. This defines a monoidal functor  $\xdd{?} : \C\fd \C$ defined by $X\ap \xdd{X}$ and $f\ap \xdd{f}$, called the \emph{double right dual functor}.

\medskip

\subsection*{Sovereign categories}

A \emph{sovereign structure} on a right autonomous category $\C$ consists in the choice of a right dual for each object of $\C$ together with a monoidal isomorphism $\phi : 1_{\C} \fd \xdd{?}$, where $1_{\C}$ is the identity functor of $\C$. Two sovereign structures are \emph{equivalent} if the corresponding monoidal isomorphisms coincide via the canonical identification of the double dual functors.

A \emph{sovereign category} is a right autonomous category endowed with an equivalence class of sovereign structures.

Let $\C$ be a sovereign category, with chosen right duals $(\xd{X},e_X,h_X)$ and sovereign isomorphisms $\phi_X : X\fd \xdd{X}$. For each object $X$ of $\C$, we set :

$$
\epsilon_X = e_{\xd{X}}(\id_{\xd{X}}\pt \phi_X) \qquad \textrm{and} \qquad \eta_X = (\phi_X^{-1}\pt\id_{\xd{X}})h_{\xd{X}}\, .
$$

Then $(\xd{X},\epsilon_X,\eta_X)$ is a left dual of $X$. Therefore $\C$ is autonomous. Moreover the right left functor $\xg{?}$ defined by this choice of left duals coincides with $\xd{?}$ as a monoidal functor. From now on, for each sovereign category $\C$ we will make this choice of duals.

The sovereign categories are an appropriate categorical setting for a good notion of trace. Let $\C$ be a sovereign category and $X$ be an object of $\C$. For each endomorphism $f\in Hom_{\
C}(X,X)$,
$$\tr_l(f)=\epsilon_X (\id_{\xd{X}}\pt f)h_X \in Hom_{\C}(\Bbbi,\Bbbi)$$
is the \emph{left trace} of $f$ and
$$\tr_r(f)=e_X(f\pt \id_{\xd{X}})\eta_X\in Hom_{\C}(\Bbbi,\Bbbi)$$
is the \emph{right trace} of $f$. We denote by $\dim_r(X)=\tr_r(\id_X)$ (resp. $\dim_l(X)=\tr_l(\id_X)$) the \emph{right dimension} (resp.
\emph{left dimension}) of $X$.

\subsection*{Tensor categories}

By a \emph{$\Bbbk$-linear category}, we shall mean a category for which the set of morphisms are $\Bbbk$-spaces, the composition is $\Bbbk$-bilinear
, there exists a null object and for every objects $X$, $Y$ the direct sum $X\oplus Y$ exists in $\C$.

A $\Bbbk$-linear category is \emph{abelian} if it admits finite direct sums, every morphism has a kernel and a cokernel, every monomorphism is the kernel of its cokernel, every epimorphism is the cokernel of its kernel, and every morphism is expressible as the composite of an epimorphism followed by a monomorphism.

An object $X$ of an abelian $\Bbbk$-category $\C$ is \emph{scalar} if $Hom_{\C}(X,X)\cong \Bbbk$.

A \emph{tensor category over $\Bbbk$} is an autonomous category endowed with a structure of $\Bbbk$-linear abelian category such that the tensor product is $\Bbbk$-bilinear and the unit object is a scalar object.

A $\Bbbk$-linear category is \emph{semisimple} if :
\begin{itemize}
\item[(i)] every object of $\C$ is a finite direct sum of scalar objects,
\item[(ii)] for every scalar objects $X$ and $Y$, we have : $X\cong Y$ or $\ho{X}{Y}=0$\,.
\end{itemize}

A \emph{semisimple $\Bbbk$-category} is a semisimple abelian $\Bbbk$-linear category and every simple object is a scalar object. Notice that in a semisimple abelian $\Bbbk$-category every scalar object is a simple object. By a \emph{finitely semisimple $\Bbbk$-category} we shall mean a semisimple $\Bbbk$-category which has finitely many isomorphism classes of scalar objects. The set of isomorphism classes of scalar objects of an abelian $\Bbbk$-category $\C$ is denoted by $\lc$.

%

\subsection*{Graduations}

Let $\C$ be semisimple tensor $\Bbbk$-category and $G$ be a group. A \emph{$G$-graduation of $\C$} is a map $p : G\fd \lc$ satisfying :
\begin{itemize}
\item[$\bullet$] $p(Z)=p(X)p(Y)$, for every scalar objects $X,Y,Z$ such that $Z$ is a subobject of $X\pt Y$.
\end{itemize}

A \emph{graduation of $\C$} is a pair $(G,p)$, where $G$ is group and $p$ is a $G$-graduation of $\C$.

By induction, the multiplicity property of a graduation can be extended to $n$-terms.

In \cite{HQFTp}, we prove that every semisimple tensor $\Bbbk$-category admits an universal graduation:
\begin{proposition}[\cite{HQFTp}]\label{pro:graduation}
Let $\C$ be a semisimple tensor $\Bbbk$-category. There exists a graduation $(\grad,\cs{?})$ of $\C$ satisfying the following universal property : for every graduation $(G,p)$ of $\C$, there exists a unique group morphism $f : \grad \fd G$ such that the diagram:
$$
\xymatrix{\lc \ar[rr]^{\cs{?}} \ar[dr]_{p} && \grad \ar[dl]^{f}\commutatif\\
&G& }
$$
commutes.
\end{proposition}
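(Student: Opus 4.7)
The plan is to build $\grad$ as a presentation of a group by generators and relations, with the generators coming from $\lc$ and the relations dictated by the graduation axiom.

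First I would form the free group $F(\lc)$ on the set $\lc$ of isomorphism classes of scalar objects, writing $[X]$ for the generator attached to $X \in \lc$, and then introduce the normal subgroup $N$ of $F(\lc)$ generated by the words $[X][Y][Z]^{-1}$ where $(X,Y,Z)$ ranges over triples of scalar objects such that $Z$ is (isomorphic to) a subobject of $X\pt Y$; the semisimplicity of $\C$ ensures a plentiful supply of such triples. I would set $\grad := F(\lc)/N$ and define $\cs{?} : \lc \fd \grad$ as the composite of the tautological inclusion $\lc \hookrightarrow F(\lc)$ with the quotient map.

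The next step is to verify that $(\grad,\cs{?})$ is a graduation of $\C$. This is immediate from the construction: the relations imposed on $\grad$ are exactly the multiplicativity constraints $\cs{Z}=\cs{X}\cs{Y}$ required. As a consistency check, one notes that $\cs{\Bbbi}$ is the neutral element (since $\Bbbi$ is a scalar subobject of $\Bbbi \pt \Bbbi$) and $\cs{\xd{X}}=\cs{X}^{-1}$ (since $\Bbbi \hookrightarrow X \pt \xd{X}$ in a semisimple autonomous category), so the inverse in $\grad$ of the class of a scalar is the class of its dual.

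For the universal property, let $(G,p)$ be any graduation of $\C$. The underlying set map $p : \lc \fd G$ extends, by the universal property of the free group, to a unique group homomorphism $\tilde{p} : F(\lc) \fd G$. The graduation axiom for $(G,p)$ states that for every triple $(X,Y,Z)$ as above $\tilde{p}([X][Y][Z]^{-1}) = p(X)p(Y)p(Z)^{-1} = e$, so $\tilde{p}$ annihilates every generator of $N$ and therefore descends to a group homomorphism $f : \grad \fd G$. By construction $f\circ \cs{?}=p$, and uniqueness of $f$ is automatic since the image of $\cs{?}$ generates $\grad$ as a group. I do not anticipate a serious obstacle here: the statement is the standard \emph{group-of-relations} packaging of a multiplicative axiom, and the only substantive content is the remark that the axiom as stated, concerning subobjects of a binary tensor product, is strong enough to enforce all multiplicative relations, the $n$-term version being obtained by induction as noted in the text.
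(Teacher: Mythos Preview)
The paper does not actually prove this proposition: it is stated with attribution to \cite{HQFTp} and no proof is given in the present text, so there is nothing in the paper to compare your argument against line by line.

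That said, your construction is the standard and correct one for producing an initial object in the category of graduations: present $\grad$ as the free group on $\lc$ modulo the normal closure of the multiplicativity relations, and read off the universal property from the universal property of free groups together with the fact that every graduation annihilates exactly those relations. Your consistency checks ($\cs{\Bbbi}=1$, $\cs{\xd{X}}=\cs{X}^{-1}$) are correct consequences, and your remark that the image of $\cs{?}$ generates $\grad$ is what forces uniqueness of $f$. This is almost certainly the argument of \cite{HQFTp} as well, since there is essentially only one way to build such a universal group. One small quibble: the paper (by an evident typo) writes the graduation as a map $p:G\to\lc$, but the diagram in the proposition and all subsequent usage make clear that $p:\lc\to G$ is intended, which is the convention you correctly follow.
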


Let $\C$ be a semisimple tensor $\Bbbk$-category, the group $\grad$ which defines the universal graduation $(\grad,\cs{?})$ is called \emph{the graduator of $\C$}. The graduator can be used to describe the sovereign (resp. spherical) structures of a sovereign (resp. spherical) category \cite{HQFTp}.

\subsubsection*{Example} The graduator of the category of representations of finite dimension of $U_q(\frak{sl}_n)$ is $\Zz_n$

\subsection*{Spherical categories}A \emph{spherical category} is a sovereign, finitely semisimple tensor $\Bbbk$-category satisfying:
\begin{itemize}
\item[$\bullet$] for every object $X$ of $\C$ and for every morphism $f : X \fd X$ : $\tr_r(f)=\tr_l(f)$.
\end{itemize}

A \emph{spherical structure} on $\C$ is a sovereign structure on $\C$ such that $\C$ is a spherical category.

From now on, for every spherical category the left and right trace (resp. dimension) will be denoted by $\tr$ (resp. $\dim$).

The dimension of a spherical category is the scalar : $\dis{\dc=\sum_{X\in \lc}\dim(X)^2}\in \Bbbk$. From now on, unless otherwise specified, spherical categories are assumed to have an invertible dimension.

\section{The Turaev-Viro invariant}\label{sec:TVconstruc}

In this Section, we recall the construction of the Turaev-Viro invariant. For further reading on the Turaev-Viro invariant, we refer the reader to \cite{TV} (the original construction), \cite{BW} (the construction using a spherical category), \cite{GK} and \cite{Tu}. Throughout this Section, $\C$ will be a spherical category.

\medskip

An \emph{orientation} of a $n$-simplex $F$ is a map  $o : Num(F)\fd \{\pm1\}$, where $Num(F)$ is the set of numberings of $F$, invariant under the action of the alternated group $\mathfrak{A}_{N+1}\subset \mathfrak{S}_{N+1}$.

Let $T$ be an oriented simplicial complex, we denote the set of oriented $p$-simplexes by $T^p_o$. A \emph{coloring} of $T$ is a map $c : T^1_o \fd \lc$ satisfying : \begin{itemize}
\item[(i)] $c(x_1x_2)=c(x_2x_1)^{\vee}$, for every oriented 1-simplex $(x_1x_2)$,
\item[(ii)] the unit object $\Bbbi$ is a subobject of $c(x_1x_2)\pt c(x_2x_3)\pt c(x_3x_1)$ for every oriented 2-simplex $(x_1x_2x_3)$.
\end{itemize}
We denote by $Col(T)$ the set of colorings of $T$ .

Let $f$ be an oriented 2-simplex, $c$ be a coloring of $T$ and $\nu=(x_1x_2x_3)$ be a numbering of $f$ compatible with the orientation of $f$. Set :
$$
V_{\C}(f,c)_{\nu}=\ho{\Bbbi}{c(x_1x_2)\pt c(x_2x_3)\pt c(x_3x_1)}\, .
$$

The vector space $V_{\C}(f,c)$ does not depend on the choice of the numbering compatible with the orientation (e.g. \cite{BW}, \cite{GK}, \cite{Tu}). From now on, the vector space $V_{\C}(f,c)_{\nu}$, with $\nu=(x_1x_2x_3)$, will be denoted by $V_{\C}(x_1x_2x_3,c)$. If there is no ambiguity on the choice of the coloring $c$, then $V_{\C}(x_1x_2x_3,c)$ will be denoted by $V_{\C}(x_1x_2x_3)$.

Let us recall some properties of the vector space defined above. For every scalar objects $X$, $Y$ and $Z$, we set:
\begin{align}
\omega_{\C} : Hom_{\C}(\Bbbi,X\pt Y \pt Z)\pt_{\Bbbk} Hom_{\C}(\Bbbi,\xd{Z}\pt \xd{Y} \pt \xd{Z})&\fd \Bbbk^* \label{bilinear}\\
f\pt g & \ap tr(\xd{f}g)\, .\nonumber
\end{align}
For every spherical category $\C$, the bilinear form $\omega_{\C}$ is non degenerate (e.g. \cite{BW}, \cite{GK}, \cite{Tu}). Let $f$ be an oriented 2-simplex, we denote by $\ov{f}$ the 2-simplex $f$ endowed with the opposite orientation. Let $c$ be a coloring of $f$, the bilinear form (\ref{bilinear}) induces: $V_{\C}(f,c)^{*}\cong V_{\C}(\ov{f},c)$.

In the construction of the Turaev-Viro invariant, we assign to every oriented 3-simplex of a 3-manifold $M$, endowed with a coloring, a vector which lies in the vector space defined by the faces of the 3-simplex. The vector assigned to each 3-simplex is obtained by the 6j-symbols of the category. A contraction of these vectors along the 2-simplexes contained inside the 3-manifold $M$ leads to a scalar if the manifold $M$ is without boundary or to a vector in $\dis{\bigotimes_{f\in T^2_{\partial M} }V_{\C}(f,c)}$ if the manifold $M$ has a boundary $\partial M$. We denote this vector (or scalar)  by $W_c$, for every coloring $c$.

  We introduce some notations. Let $\Sigma$ be an oriented closed surface endowed with a triangulation $T_0$. For every coloring $c_0$  of $T_0$, we set : $\dis{V_{\C}(\Sigma,T_0,c_0)=\bigotimes_{f\in T^2}V_{\C}(f,c_0)}$ and $\dis{V_{\C}(\Sigma,T_0)=\bigoplus_{c\in Col(T_0)}V_{\C}(\Sigma,T_0,c)}$. Let $M$ be 3-manifold with boundary $\Sigma$ and $T$ be a triangulation of $M$ such that its restriction to $\Sigma$ is $T_0$. For every coloring $c_0\in Col(T_0)$, we denote by $Col_{c_0}(T)$ the set of colorings of $T$ such that the restriction to $T_0$ is $c_0$. With this notation, for every coloring $c\in Col_{c_0}(T)$, we have: $W_c\in V_{\C}(\Sigma,T_0,c_0)$. Furthermore we choose a square root $\dc^{1/2}$ of $\dc$.

For every scalar object $X$ of $\C$, we set $\dim(X)^{1/2}$ a square root of $\dim(X)$. The equalities $\dim(X)^{1/2}= \dim(\xd{X})^{1/2}$ and $\dim(X)= \dim(\xd{X})$ ensure independence of $\dim(c(e))$, $\dim(c(e))^{1/2}$ of the choice of the orientation of $e$, for every coloring $c$.

\begin{theorem}[Turaev-Viro invariant \cite{BW}, \cite{GK}, \cite{Tu}, \cite{TV} ]
Let $\C$ be a spherical category with an invertible dimension, $M$ be a compact oriented 3-manifold and $\partial M$ be the boundary of $M$ endowed with a triangulation $T_0$. For every coloring $c_0\in Col(T_0)$, we set :
\begin{equation}
\label{invTV}
\scriptsize{TV_{\C}(M,c_0)=\dc^{-n_0(T)+n_0(T_0)/2}\sum_{c \in Col_{c_0}(T)}\prod_{e\in T^1_0}\dim(c_0(e))^{1/2}\prod_{e\in
T^1\backslash T^1_0}\dim(c(e))W_{c}\in V(\partial M,c_0T_0)\, ,}
\end{equation}
where $n_0(T)$ (resp. $n_0(T_0)$) is the number of $0$-simplexes of $T$ (resp. $T_0$) and $T^1\backslash T^1_0$ is the set of 1-simplexes of $M\backslash\partial M$.
For every coloring $c_0\in Col(T_0)$, the vector $TV_{\C}(M,c_0)$ is independent on the choice of the triangulation of $M$ which extends $T_0$. The Turaev-Viro invariant is the vector:
$$
TV_{\C}(M)=\sum_{c_0\in Col(T_0)}TV_{\C}(M,c_0)\in V_{\C}(\partial M,T_0)=\bigoplus_{c_0\in Col(T_0)}V(\partial M, T_0,c_0)\,.
$$
\end{theorem}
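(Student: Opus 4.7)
The plan is to reduce the invariance of $TV_{\C}(M,c_0)$ under change of triangulation to a check on Pachner moves. By the relative version of Pachner's theorem, any two triangulations of the compact oriented 3-manifold $M$ that restrict to the same triangulation $T_0$ on $\partial M$ are connected by a finite sequence of bistellar moves supported in the interior of $M$: the 2--3 move (two tetrahedra sharing a common 2-face $\leftrightarrow$ three tetrahedra sharing a new interior edge) and the 1--4 move (one tetrahedron $\leftrightarrow$ four tetrahedra meeting at a new interior vertex). Because $T_0$ is untouched, the boundary contribution $\prod_{e\in T_0^1}\dim(c_0(e))^{1/2}$ and the exponent $n_0(T_0)/2$ in the prefactor are identical for all admissible triangulations; it suffices to verify that the remaining part of the state-sum is preserved under each move.

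For the 2--3 move I would isolate the local contribution of the two tetrahedra on one side: it is a product of two 6j-symbols, one per tetrahedron, with no weight attached to the shared 2-face. After the move, the three new tetrahedra contribute three 6j-symbols, and summing over the color of the unique new interior edge $e$ with weight $\dim(c(e))$ reproduces exactly the Biedenharn--Elliott (pentagon) identity for the 6j-symbols of a spherical category. Since no 0-simplex is created or destroyed by a 2--3 move, the global prefactor $\dc^{-n_0(T)}$ is unchanged and invariance follows directly from the pentagon identity.

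For the 1--4 move the situation is more delicate: one tetrahedron is replaced by four tetrahedra meeting at a new interior vertex $v$, adding one 0-simplex, four interior edges and six interior 2-simplexes. Summing over the colors of the four new edges with the corresponding product of $\dim$-weights and iterating the completeness (orthogonality) relation for 6j-symbols in the spherical category $\C$, the four new 6j-symbols collapse to the 6j-symbol of the original tetrahedron multiplied by an overall factor of $\dc$; this factor is precisely absorbed by the change $-n_0(T)\mapsto -n_0(T)-1$ in the normalization exponent coming from the new vertex $v$. The main technical obstacle will be the bookkeeping for this move: one has to track the orientations of the new oriented 1-simplexes through the duality $c(x_1x_2)=c(x_2x_1)^{\vee}$, the choices of square roots $\dim(X)^{1/2}$, and the identifications between the local triangle spaces $V_{\C}(f,c)$ across the move. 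It is cleanest to encode each oriented tetrahedron as a 6j-vector inside the tensor product of the triangle spaces $V_{\C}(f,c)$ associated to its four 2-faces, as in \cite{BW,GK,Tu}, so that the two algebraic inputs (pentagon for the 2--3 move, orthogonality for the 1--4 move) can be applied in a single local sector without interacting with the rest of $M$.
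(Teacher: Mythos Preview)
Your outline is the standard argument, and there is no genuine gap in the strategy. But note that the paper does not actually prove this theorem: it is stated as a citation of \cite{BW}, \cite{GK}, \cite{Tu}, \cite{TV} and no proof is given in the text. So there is nothing to compare your proposal against; the author is simply recalling a known result to set up notation for the homotopical refinements that follow.

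That said, your sketch matches the proofs in those references. A couple of small remarks if you intend to flesh it out. First, you will want to be explicit that the relative Pachner theorem you invoke holds for \emph{singular} (or semi-simplicial) triangulations, since the state-sums in \cite{TV,GK} and this paper are formulated in that generality; the combinatorial triangulation version is not quite enough without a preliminary subdivision argument. Second, in the 1--4 move the computation does not reduce to a single application of orthogonality: one typically first performs a 2--3 move inside the four-tetrahedron configuration (pentagon identity) and then a single orthogonality to collapse the remaining pair, producing the factor $\dc$ that cancels the new vertex. Keeping that two-step structure makes the bookkeeping of orientations and triangle spaces $V_{\C}(f,c)$ much cleaner than trying to iterate orthogonality four times as you suggest.
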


From now on, for every coloring $c\in Col_{c_0}(T)$ we denote by $w_c$ the scalar $\dis{\prod_{e\in T^1_0}\dim(c_0(e))^{1/2}\prod_{e\in T^1\backslash T^1_0}\dim(c(e))}$.

\section{The homotopical Turaev-Viro invariant}\label{sec:TVhom}

In this Section, we will extend the construction of the homotopical Turaev-Viro invariant defined in \cite{HQFTp}. More precisely, the homotopical Turaev-Viro invariant was built using the graduator of a spherical category. Furthermore we will  define an homotopical Turaev-Viro invariant for every graduation of a spherical category with an invertible dimension.

\subsection{$G$-colorings}

Throughout this paragraph $\C$ will be a finitely semisimple tensor $\Bbbk$-category and $G$ will be a group.

Let $T$ be a simplicial complex. A \emph{$G$-coloring $c$} of $T$ is a map :

\begin{align*}
c:T^1_o & \fd G\\
e & \ap c(e),
\end{align*}
satisfying :
\begin{itemize}
\item[(i)]for every  oriented 1-simplex $(x_1x_2)$ of $T$:~$c(x_1x_2)=c(x_2x_1)^{-1}$\, ,
\item[(ii)]for every oriented 2-simplex  $(x_1x_2x_3)$ of $T$:~$c(x_1x_2)c(x_2x_3)c(x_3x_1)=1$\, .\\
\end{itemize}

We denote by $Col_G(T)$ the set of $G$-colorings of $T$.

In \cite{HQFTp}, we define an action on the set of $G$-colorings of $T$ using the gauge group of $T$. \emph{A gauge of $T$ with value in $G$} is a map $\delta : T^0\fd G$ and we denote $\m{G}^G_T$ the gauge group of T with value in $G$. The action is defined in the following way:
\begin{align}
\m{G}^G_T \times Col_G(T) & \fd Col_G(T) \label{action:jauge}\\
(\delta,c) & \ap c^{\delta},\nonumber
\end{align}
where $c^{\delta}$ is the $G$-coloring : $c^{\delta}(xy)=\delta(x)c(xy)\delta(y)^{-1}$, for every oriented 1-simplex $(xy)$. We denote by $Col_G(T)/\m{G}^G_T$ the quotient set of $Col_G(T)$ by the action of the gauge group $\m{G}^G_T$. We have the following topological interpretation of $Col_G(T)/\m{G}^G_T$:
\begin{proposition}[\cite{HQFTp}]\label{pro:jau}
Let $T$ be a simplicial complex, $\C$ be a semisimple tensor $\Bbbk$-category and $G$ be a group. The map :
\begin{align*}
Col_G(T) & \fd Fun(\pi_1(T),G)\\
c& \ap F_c \, ,
\end{align*}
where $F_c$ is the functor which sends every 0-simplex of $T$ to the unique object of $G$ and sends every oriented 1-simplex $(xy)$ to $c(xy)$, induces the following isomorphism :
\begin{equation}
Col_G(T)/\m{G}^G_T\cong Fun(\pi_1(T),G)/(iso)\cong [\cs{T},BG]\, ,\label{pro:suitebij}
\end{equation}
where $[\cs{T},BG]$ is the set of homotopy classes of continuous maps from the topological space $\cs{T}$ to the classifying space $BG$.
\end{proposition}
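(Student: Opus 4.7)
My plan is to prove the two displayed bijections separately, then compose.

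\textbf{Step 1: From $G$-colorings to functors.} I would first interpret the codomain $Fun(\pi_1(T),G)$ as functors from the edge-path groupoid of $T$ (objects $= T^0$, morphisms generated by oriented $1$-simplexes subject to relations coming from $2$-simplexes) into $G$ viewed as a one-object groupoid; this groupoid is equivalent to the fundamental groupoid of $\cs{T}$. I would then verify that the two axioms of a $G$-coloring are exactly the functoriality conditions: condition (i), $c(x_1x_2)=c(x_2x_1)^{-1}$, encodes compatibility with inverse morphisms in the groupoid, and condition (ii), $c(x_1x_2)c(x_2x_3)c(x_3x_1)=1$, encodes that the relations imposed by each $2$-simplex are preserved. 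Conversely every such functor restricts to a $G$-coloring by evaluation on oriented $1$-simplexes. Hence $c\mapsto F_c$ is a bijection $Col_G(T)\cong Fun(\pi_1(T),G)$ before quotienting.

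\textbf{Step 2: Gauge equivalences correspond to natural isomorphisms.} A natural transformation $F_{c}\Rightarrow F_{c'}$ consists of a family of morphisms in $G$ (one per object of $\pi_1(T)$, i.e.\ per $0$-simplex), which is precisely a gauge $\delta : T^0\fd G$. The naturality square on an oriented $1$-simplex $(xy)$ reads $\delta(x)c(xy)=c'(xy)\delta(y)$, equivalently $c'(xy)=\delta(x)c(xy)\delta(y)^{-1}$; this is exactly the gauge action of (\ref{action:jauge}). Thus two $G$-colorings are gauge equivalent iff the associated functors are isomorphic, yielding the first isomorphism
\[
Col_G(T)/\m{G}^G_T\;\cong\;Fun(\pi_1(T),G)/(\mathrm{iso}).
\]

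\textbf{Step 3: From functors to homotopy classes of maps into $BG$.} For the second isomorphism I would appeal to the classical fact that, for a CW complex $X$ and a discrete group $G$, one has a natural bijection between homotopy classes of maps $X\fd BG$ and isomorphism classes of functors from the fundamental groupoid of $X$ to $G$ (which, component by component with chosen basepoints, amounts to $\mathrm{Hom}(\pi_1(X),G)$ modulo conjugation). Applied to $X=\cs{T}$, whose fundamental groupoid is naturally identified with the edge-path groupoid used above, this gives $Fun(\pi_1(T),G)/(\mathrm{iso})\cong [\cs{T},BG]$.

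\textbf{Main obstacle.} The algebraic content of Steps 1--2 is essentially a bookkeeping check. The delicate point is Step 3: one must carefully match the combinatorial edge-path groupoid of the abstract simplicial complex $T$ with the topological fundamental groupoid of its realization $\cs{T}$, and justify the bijection with $[\cs{T},BG]$ (either by invoking that $BG$ is a $K(G,1)$ and using obstruction theory on the CW structure of $\cs{T}$, or by constructing the map $\cs{T}\fd BG$ explicitly from a functor by sending each $1$-simplex to the $1$-cell of $BG$ labelled by its image in $G$ and extending over $2$-simplexes thanks to condition (ii)). Once this identification is in place, composing the bijections of Steps 1--3 yields the proposition.
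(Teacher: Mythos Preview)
Your argument is correct and is the standard route to this result. Note, however, that the paper does not actually prove this proposition: it is quoted verbatim from \cite{HQFTp} and carries no proof in the present text, so there is no ``paper's own proof'' to compare against. Your Steps~1--2 are exactly the intended unpacking of the definitions (a $G$-coloring is precisely a functor from the edge-path groupoid to the one-object groupoid $G$, and a gauge is precisely a natural isomorphism), and your Step~3 invokes the classical identification $[\,X,BG\,]\cong \mathrm{Fun}(\Pi_1(X),G)/\mathrm{iso}$ for $BG$ a $K(G,1)$; nothing more is needed.
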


\medskip

Let us recall the topological interpretation of the $G$-colorings, in the case of manifolds with boundary.

Let $M$ be a 3-manifold, $\Sigma$ be the boundary of $M$ and $T_0$ be a triangulation of $\Sigma$. We set $Col_{G,c_0}(T)$ the set of $G$-colorings of $T$ such that the restriction to $T_0$ is $c_0$. In this case we consider the gauge action which does not change the $G$-coloring on the boundary, i.e. the restriction of $c^{\delta}$ to $T_0$ is $c_0$.

For every functor $F_0 : \pi_1(T_0)\fd G$, $Fun(\pi_1(T),G)_{F_0}$ is the set of functors $F$ from $\pi_1(T)$ to the groupoid $G$ such that the diagram :
$$
\xymatrix{
\pi_1(T) \ar[r]^{F}&  G \\
\pi_1(T_0) \ar@{^{(}->}[u]^{i} \ar[ur]_{F_0}\, ,&&
}
$$
commutes, with $i$ is the inclusion functor. We denote by $Fun(\pi_1(T),G)_{F_0}/(iso)$ the set of isomorphisms classes of functors such that the restriction of the natural isomorphisms to $\pi_1(T_0)$  is $\id_{F_0}$.

\begin{proposition}(\cite{HQFTp})\label{pro:colbound}
Let $\C$ be a semisimple tensor $\Bbbk$-category, $T$ be a simplicial complex and $T_0$ be a subcomplex of $T$. For every coloring $c_0\in Col(T_0)$, the map:
\begin{align*}
Col_{G,c_0}(T)&\fd Fun(\pi_1(T),G)_{F_{c_0}} \\
c & \ap F_c,
\end{align*}
where the functor $F_c$ sends every 0-simplex of $T$ to the unique object of the groupoid $G$ and every oriented 1-simplex $(xy)$ to $c(xy)$, induces the following isomorphism :
\begin{equation}
Col_{G,c_0}(T)/\m{G}^G_T\simeq Fun(\pi_1(T),G)_{F_{c_0}}/(iso)\, .
\end{equation}
\end{proposition}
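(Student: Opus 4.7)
My plan is to adapt the argument of Proposition \ref{pro:jau} while carefully tracking the boundary data. First I would check that $c \mapsto F_c$ is a well-defined map into $Fun(\pi_1(T),G)_{F_{c_0}}$: conditions (i) and (ii) in the definition of a $G$-coloring are exactly the statements that $F_c$ respects edge inversion and the triangular cocycle condition, which together with a standard presentation of the fundamental groupoid $\pi_1(T)$ by 1-simplices (generators) and 2-simplices (relations) imply that $F_c$ extends to a functor. Since $c \in Col_{G,c_0}(T)$ restricts to $c_0$ on $T^1_0$, we have $F_c \circ i = F_{c_0}$ on generators of $\pi_1(T_0)$, hence on the whole subgroupoid. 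In particular, the absolute bijection of Proposition \ref{pro:jau} restricts to the ``fiber over $c_0$'', so the map in question is at least a bijection before quotienting.

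Next I would show the map descends to a bijection on quotients by matching the gauge action with natural isomorphisms. A gauge $\delta \in \m{G}^G_T$ with $c^{\delta} \in Col_{G,c_0}(T)$ determines a family of components $\delta(x) \in G$ indexed by $T^0$; the naturality equation for a 1-simplex $(xy)$ between the functors $F_c$ and $F_{c^{\delta}}$ unwinds to the formula $c^{\delta}(xy) = \delta(x) c(xy) \delta(y)^{-1}$ of (\ref{action:jauge}), so gauges correspond bijectively to natural isomorphisms $F_c \Rightarrow F_{c^{\delta}}$. The restriction of this natural transformation to $\pi_1(T_0)$ is the identity on $F_{c_0}$ exactly under the boundary compatibility built into the definition of $Fun(\pi_1(T),G)_{F_{c_0}}/(iso)$. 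Surjectivity on quotients then follows because any $F$ with $F \circ i = F_{c_0}$ gives back a $G$-coloring by setting $c(xy) := F([xy])$, which automatically lies in $Col_{G,c_0}(T)$.

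The main obstacle I expect is the bookkeeping around the relative equivalence relations, rather than any deep categorical or topological input. Specifically, one has to reconcile the phrase ``the gauge action which does not change the $G$-coloring on the boundary'' with ``natural isomorphisms whose restriction to $\pi_1(T_0)$ is $\id_{F_{c_0}}$'': a gauge $\delta$ that is not trivial on $T^0_0$ can still satisfy $c^{\delta}|_{T_0}=c_0$ when its boundary values lie in the stabilizer of $c_0$, and such a $\delta$ induces a non-trivial natural automorphism of $F_{c_0}$ that must either be allowed on both sides or excluded on both sides. Once this convention is fixed consistently—most naturally by restricting to gauges trivial on $T^0_0$, so that the restricted gauge group injects into the natural isomorphisms fixing $F_{c_0}$—the bijection follows formally from the absolute statement in Proposition \ref{pro:jau} applied fiberwise over the restriction map $Col_G(T) \to Col_G(T_0)$.
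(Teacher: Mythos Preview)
The paper does not supply a proof of Proposition~\ref{pro:colbound}: it is imported wholesale from \cite{HQFTp}, so there is nothing in the present text to compare your argument against. That said, your outline is exactly the expected one --- it is the relative version of the argument behind Proposition~\ref{pro:jau}, run fiberwise over the restriction map $Col_G(T)\to Col_G(T_0)$ --- and the steps you list (functoriality of $F_c$ from the edge/triangle presentation of $\pi_1(T)$, compatibility with $F_{c_0}$ on $\pi_1(T_0)$, and the dictionary between gauges and natural isomorphisms) are the right ones.

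Your identification of the one genuine subtlety is on point and worth keeping in the final write-up. As the paper phrases things, the quotient on the left is by gauges $\delta$ with $c^{\delta}|_{T_0}=c_0$, whereas the quotient on the right is by natural isomorphisms restricting to $\id_{F_{c_0}}$. A gauge that is nontrivial on $T_0^0$ but lies in the stabilizer of $c_0$ corresponds to a nontrivial natural \emph{automorphism} of $F_{c_0}$, not to the identity, so taken literally the two equivalence relations differ. Your proposed fix --- interpret the relative gauge group as those $\delta$ with $\delta|_{T_0^0}\equiv 1$ --- is the clean convention that makes the bijection go through verbatim, and it is also the convention that is actually used downstream (e.g.\ in the identification~(\ref{isobord}) with $[M,BG]_{\Sigma,x_{c_0}}$, which is a set of \emph{relative} homotopy classes). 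State this convention explicitly at the outset and the rest of your argument is complete.
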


\medskip

From now on, $\C$ is a spherical category and $(G,p)$ is a graduation on $\C$.

\medskip

Let us introduce some notations.  Let $M$ be a 3-manifold and $T$ be a triangulation of $M$. By definition of the graduation,  for every coloring $c\in Col(T)$, $pc$ is a $G$-coloring of $T$. Then for every $x\in [M,BG]$, we denote by $Col_{(G,p),x}(T)$ the set of colorings $c$ of $T$ such that the equivalence class $[pc]$ in $Col_{G}(T)/\m{G}^G_T$ corresponds to $x$  (bijection \ref{pro:suitebij}). We obtain a partition of the set $Col(T)$: $\dis{Col(T)=\coprod_{x\in [M,BG]}Col_{(G,p),x}(T)}$. If $c\in Col(T)$, we denote by $x_c\in [M,BG]$ the homotopy class associated to $pc$.

Let $M$ be a 3-manifold,  $\Sigma$ be the boundary of $M$ and  $T_0$ be a triangulation of $\Sigma$. For every homotopy class $x_0\in [\Sigma,BG]$, we denote by $[M,BG]_{\Sigma,x_0}$ the set of homotopy classes of maps from $M$ to the classifying space $BG$ such that the homotopy class of the restriction to $\Sigma$ is $x_0$. Thus for every coloring $c_0\in Col(T_0)$ and for every triangulation $T$ of $M$ such that the restriction to $\Sigma$ is $T_0$, we have:
\begin{equation}\label{isobord}
Col_{G,c_0}/(\m{G}^G_T) \cong \dis{Fun(\pi_1(T),G)_{F_{c_0}}/(iso)  \cong [M,BG]_{\Sigma,x_{c_0}}}\, .
\end{equation}

For every coloring $c_0\in Col(T_0)$ and for every homotopy class $y\in [M,BG]_{\Sigma,x_{c_0}}$, we denote by $Col_{(G,p),c_0,y}(T)$ the set of colorings $c\in Col(T)$ satisfying :
\begin{itemize}
\item $c_{T_0}=c_0$,
\item  the equivalent class $[pc]\in Col_{G,c_0}/\m{G}^G_T$ corresponds to  $y\in [M,BG]_{\Sigma,x_{c_0}}$ by the bijections (\ref{isobord}).
\end{itemize}

\medskip

Let us define the homotopical Turaev-Viro invariant obtained from the graduation $(G,p)$.

 Let $M$ be a 3-manifold, $\Sigma$ be the boundary of $M$, $T_0$ be a triangulation of $\Sigma$ and $c_0\in Col(T_0)$. We can break up the Turaev-Viro state sum in the following way:
\begin{align*}
TV_{\C}(M,c_0)&= \dc^{-n_0(T)+n_0(T_0)/2}\sum_{c \in Col_{c_0}(T)} w_c W_c \\
&= \dc^{-n_0(T)+n_0(T_0)/2} \sum_{x\in [M,BG]_{(\Sigma,x_{c_0})}}\sum_{c\in Col_{(G,p),c_0,x}(T)}w_cW_c\, ,
\end{align*}
we set: $\dis{HTV^{(G,p)}_{\C}(M,x,c_0)=\dc^{-n_0(T)+n_0(T_0)/2}\sum_{c\in Col_{(G,p),c_0,x}}w_c W_c}$.

The vector $HTV^{(G,p)}_{\C}(M,x,c_0)$ is an invariant for the triple $(M,x,c_0)$. The proof of the invariance is similar to the proof given in \cite{HQFTp} (Theorem 4.6).

\begin{theorem}\label{thm:inv}
Let $\C$ be a spherical category with an invertible dimension, $M$ be 3-manifold, $\Sigma$ be the boundary of $M$ and $T_0$ be a triangulation of $\Sigma$. For every coloring $c_0\in Col(T_0)$ and for every homotopy class $x\in [M,BG]_{\Sigma,x_{c_0}}$, where $x_{c_0}\in [\Sigma,BG]$ is obtained from $c_0$, the vector :
$$HTV^{(G,p)}_{\C}(M,c_0,x)=\dc^{-n_0(T)+n_0(T_0)/2}\sum_{c\in Col_{c_0,x}(T)}w_cW_c \in V_{\C}(\Sigma,T_0,c_0)\,$$
is an invariant of the triple $(M,x,c_0)$. We have the following equality:
\begin{equation}\label{TVsplit}
TV_{\C}(M,c_0)=\sum_{x\in [M,BG]_{\Sigma,x_{c_0}}}HTV^{(G,p)}_{\C}(M,c_0,x)\, .
\end{equation}
\end{theorem}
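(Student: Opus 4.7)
The equality (\ref{TVsplit}) follows directly from the definitions. The bijection of equation (\ref{isobord}), applied to the $G$-coloring $pc$ associated to a coloring $c \in Col_{c_0}(T)$, partitions $Col_{c_0}(T)$ into the subsets $Col_{(G,p),c_0,x}(T)$ indexed by $x \in [M,BG]_{\Sigma,x_{c_0}}$. Regrouping the Turaev-Viro state sum along this partition yields the identity, so the substantive content of the theorem is the independence of $HTV^{(G,p)}_{\C}(M,c_0,x)$ from the triangulation $T$ extending $T_0$.

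To establish this invariance I plan to follow the scheme of Theorem 4.6 of \cite{HQFTp}. By Pachner's theorem, any two triangulations of $M$ restricting to $T_0$ on $\Sigma$ are connected by a finite sequence of bistellar moves supported in the interior of $M$, so it suffices to treat a single such move (together with a local subdivision in a collar of $\Sigma$). A bistellar move replaces an internal PL ball $B$ triangulated as $U$ by a second triangulation $U'$ with the same triangulation of $\partial B$. The classical invariance of $TV_{\C}(M,c_0)$ rests on a local $6j$-symbol identity equating $\sum_{c|_U} \prod_{e \in U^1 \setminus (\partial U)^1} \dim(c(e)) W_{c|_U}$ with the analogous sum over $U'$, for every fixed coloring of the boundary $1$-simplices; my task is to refine this identity block by block for the partition by $x$.

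The refinement is afforded by the following key observation: once $c$ is fixed on $M \setminus \operatorname{int}(B)$, the class $x_c \in [M,BG]$ is already determined, independently of how $c$ is extended across $B$. Indeed, Proposition \ref{pro:colbound} applied to the pair $(B,\partial B)$ identifies the gauge classes of $G$-colorings of $U$ with fixed restriction on $\partial B$ with the set of homotopy classes of maps $B \fd BG$ restricting to the corresponding class on $\partial B$; this set is a singleton because $B$ is contractible and $BG$ is a $K(G,1)$. Hence any two $G$-colorings of $U$ agreeing on $\partial B$ are interior-gauge-equivalent, and produce the same global class in $[M,BG]_{\Sigma,x_{c_0}}$ after gluing to the outside. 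The restriction of the local $6j$-identity to colorings yielding a prescribed $x$ therefore reduces to a restriction on the coloring of $\partial U = \partial U'$, and the equality passes verbatim to each homotopy block.

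The main obstacle is making the gluing step of the previous paragraph rigorous: one must check, by a van Kampen-type argument for the decomposition $M = \overline{M \setminus \operatorname{int}(B)} \cup_{\partial B} B$, that the bijection of Proposition \ref{pro:colbound} is compatible with the combination of gauge classes across $\partial B$, so that $x_c$ for the glued coloring depends only on the class of the outside piece and the shared class on $\partial B$. Once this local-to-global compatibility is in place, the rest of the argument (boundary subdivisions, independence of the chain of Pachner moves, summation over $c_0$-compatible colorings) closes by the same induction as in \cite{HQFTp}; the only novelty relative to that proof is that the grading group $G$ replaces the universal graduator $\grad$, which affects the indexing of the blocks but not the $6j$-computations themselves.
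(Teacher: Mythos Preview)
Your proposal is correct and follows exactly the approach the paper indicates: the paper derives the splitting \eqref{TVsplit} from the partition $Col_{c_0}(T)=\coprod_x Col_{(G,p),c_0,x}(T)$ just before the theorem, and for the invariance it simply states that ``the proof of the invariance is similar to the proof given in \cite{HQFTp} (Theorem 4.6)'' without further detail. Your reconstruction of that argument (Pachner moves, contractibility of the local ball forcing a unique gauge class, hence the homotopy block being determined by the exterior coloring) is precisely the content of that referenced proof, with $\grad$ replaced by $G$; there is nothing to add.
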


The vector $HTV^{(G,p)}_{\C}$ is \emph{the $(G,p)$-homotopical Turaev-Viro invariant}. The homotopical invariant defined in \cite{HQFTp} is the $(\grad,\cs{?})$-homotopical Turaev-Viro invariant.

\section{Maximal decomposition of the Turaev-Viro invariant}\label{sec:TVhommax}

Every graduation of a spherical category defines an homotopical Turaev-Viro invariant and a splitting of the Turaev-Viro invariant. We will compare these homotopical invariants. Throughout this Section, $\C$ will be a spherical category.

Let $(G,p)$ and $(H,q)$ be two graduations of $\C$. \emph{A morphism of graduation $f$ from $(G,p)$ to $(H,q)$} is a group morphism $f: G\fd H$ such that the diagram:
$$
\xymatrix{\lc \ar[rr]^{p} \ar[dr]_{q} && G \ar[dl]^{f}\commutatif\\
&H& }
$$
commutes. Notice that in the category of graduations of $\C$, where objects are the graduations of $\C$ and morphisms are the morphisms of graduation, the universal graduation is the unique initial object (up to isomorphism).

\begin{lemma}\label{lem:mapgrad}
Let $T$ be a simplicial complex, $\C$ be a finitely semisimple tensor category, $(G,p)$ and $(H,q)$ be two graduations of $\C$ and $f: (G,p)\fd (H,q)$ be a morphism of graduation. The morphism of graduation $f$ induces the following map:
\begin{align}
\ov{F} : Col_{G}(T)/\m{G}^{G}_T &\fd Col_{H}(T)/\m{G}^{H}_T \label{mapgrad}\\
[c]&\ap [f\circ c]\,.\nonumber
\end{align}
\end{lemma}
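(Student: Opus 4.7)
The plan is to verify that the assignment $[c] \mapsto [f\circ c]$ is well-defined, which requires two essentially routine checks: first, that $f\circ c$ is indeed an $H$-coloring whenever $c$ is a $G$-coloring, and second, that the gauge equivalence is respected. Note that only the group morphism structure of $f$ is actually used here; the compatibility with the graduations $p$ and $q$ is not needed at this combinatorial level (it will matter later when comparing the colorings of $\C$).

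For the first step, I would take a $G$-coloring $c \in Col_G(T)$ and check the two defining axioms for $f\circ c$. The orientation-reversal condition $c(x_1x_2) = c(x_2x_1)^{-1}$ passes through $f$ since $f$ is a group morphism, giving $(f\circ c)(x_1x_2) = f(c(x_2x_1)^{-1}) = (f\circ c)(x_2x_1)^{-1}$. Likewise, applying $f$ to the 2-simplex cocycle condition yields
\begin{equation*}
(f\circ c)(x_1x_2)(f\circ c)(x_2x_3)(f\circ c)(x_3x_1) = f\bigl(c(x_1x_2)c(x_2x_3)c(x_3x_1)\bigr) = f(1) = 1,
\end{equation*}
so $f\circ c \in Col_H(T)$.

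For the second step, I would show that $f$ intertwines the two gauge actions. Given a gauge $\delta \in \m{G}^G_T$, define $f\circ\delta \in \m{G}^H_T$ by post-composition with $f$ on vertices. Then for any oriented 1-simplex $(xy)$,
\begin{equation*}
(f\circ c^\delta)(xy) = f\bigl(\delta(x)c(xy)\delta(y)^{-1}\bigr) = (f\circ\delta)(x)\,(f\circ c)(xy)\,(f\circ\delta)(y)^{-1},
\end{equation*}
so $f\circ c^\delta = (f\circ c)^{f\circ \delta}$. Therefore gauge-equivalent $G$-colorings are sent to gauge-equivalent $H$-colorings, and $\ov{F}$ is well-defined. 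I do not anticipate any genuine obstacle: the main point is just that all the relevant structures (colorings, gauges, actions) are defined diagrammatically in the group, so a group morphism transports them verbatim.
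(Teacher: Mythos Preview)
Your proof is correct and follows essentially the same approach as the paper: both verify that $f\circ c$ is an $H$-coloring and then check that $f(c^{\delta}) = (fc)^{f\delta}$, so gauge equivalence is preserved. Your version is in fact slightly more explicit, spelling out both coloring axioms, and your remark that only the group-morphism structure of $f$ is used here is accurate.
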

\begin{proof}
Let us show that the map (\ref{mapgrad}) is well defined. First since $f : G\fd H$ is a group morphism then for every $G$-coloring $c$, $fc$ is a $H$-coloring. Let us show that $\ov{F}$ does not depend on the choice of the representative. Let $c\in Col_{G}(T)$ and $\delta\in \m{G}^G_T$, for every oriented 1-simplex $(xy)$, one gets:
\begin{align*}
f(c^{\delta})(xy)&=f(\delta(x)c(xy)\delta(y)^{-1})\\
&= f\delta(x)fc(xy)(f\delta(y))^{-1}\\
&= (fc)^{f\delta}(xy)\, .
\end{align*}
Thus the map $\ov{F}$ is well defined.
\end{proof}

Lemma \ref{lem:mapgrad} asserts that if there is a group morphism between two graduation then we can relate the set of colorings (up to gauge actions) since the homotopical Turaev-Viro invariants are state-sum invariants indexed by the set of colorings, we can relate those invariants.

\begin{theorem}\label{thm:decompmorph}
Let $\C$ be a spherical category with an invertible dimension, $M$ be a 3-manifold, $\Sigma$ be the boundary of $M$, $T_0$ be a triangulation of $\Sigma$. For every graduations  $(G,p)$ and $(H,q)$   of $\C$ such that there exists a morphism of graduation $f : (G,p)\fd (H,q)$, we have:
\begin{equation}
HTV^{(H,q)}_{\C}(M,x,c_0)=\sum_{y\in \ov{F}^{-1}(x)}HTV_{\C}^{(G,p)}(M,y,c_0)
\end{equation}
where $F : [M,BG]\fd [M,BH]$ is the map induced by $f$ (Lemma \ref{lem:mapgrad}).
\end{theorem}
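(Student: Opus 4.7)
Both sides of the claimed equality are state sums with identical weights $\dc^{-n_0(T)+n_0(T_0)/2} w_c W_c$ indexed by subsets of $Col_{c_0}(T)$, so the proof reduces to establishing the partition
$$Col_{(H,q),c_0,x}(T) = \coprod_{y \in F^{-1}(x)} Col_{(G,p),c_0,y}(T)$$
for any triangulation $T$ of $M$ extending $T_0$. Splitting the defining sum of $HTV^{(H,q)}_{\C}(M,x,c_0)$ along the fibers of $F$ then immediately yields the identity.

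The first step is to unwind the hypothesis that $f$ is a morphism of graduations: the commutative triangle gives $q = f \circ p$ as maps $\lc \fd H$, so for every coloring $c \in Col(T)$, the induced $G$- and $H$-colorings satisfy $qc = f \circ pc$ pointwise on oriented $1$-simplexes. By Lemma \ref{lem:mapgrad}, the gauge-class map $\ov{F}: Col_{G}(T)/\m{G}^{G}_T \fd Col_{H}(T)/\m{G}^{H}_T$ therefore sends $[pc]$ to $[qc]$. Next I would check naturality of the identification of Proposition \ref{pro:colbound} with respect to $f$: the functor $F_{f \circ pc} : \pi_1(T) \fd H$ factors as $f \circ F_{pc}$, and on isomorphism classes this postcomposition is exactly the map induced by $Bf$ on $[M,BG]$. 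Thus $\ov{F}$ corresponds, via the bijections of Proposition \ref{pro:colbound}, to the relative map $F : [M,BG]_{\Sigma, x_{c_0}^{(G,p)}} \fd [M,BH]_{\Sigma, x_{c_0}^{(H,q)}}$ (the boundary data $x_{c_0}^{(H,q)}$ equals $F(x_{c_0}^{(G,p)})$ because $qc_0 = f \circ pc_0$ on $T_0$).

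With naturality in hand the partition is immediate: a coloring $c$ with $c|_{T_0} = c_0$ lies in $Col_{(H,q),c_0,x}(T)$ iff the class of $qc = f \circ pc$ corresponds to $x$, equivalently iff the class $y$ of $pc$ satisfies $F(y) = x$; since $y$ is uniquely determined by $c$, the union over $y \in F^{-1}(x)$ is disjoint and exhausts $Col_{(H,q),c_0,x}(T)$. Substituting this partition into the state sum and exchanging the two sums gives the claimed decomposition.

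The main obstacle is the naturality check in the second paragraph: one must verify that the purely algebraic map $\ov{F}$ on gauge classes of $G$-colorings matches, under the isomorphisms of Proposition \ref{pro:colbound}, the topologically defined map $F$ induced by $Bf$ on homotopy classes, and that this compatibility is preserved by the boundary-fixing condition $c|_{T_0} = c_0$. Once this is secured, the remainder of the argument is bookkeeping on the state sum.
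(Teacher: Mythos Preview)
Your proposal is correct and follows essentially the same route as the paper: both reduce the identity to the partition $Col_{(H,q),c_0,x}(T)=\coprod_{y\in F^{-1}(x)}Col_{(G,p),c_0,y}(T)$, established via $qc=f\circ pc$ and Lemma~\ref{lem:mapgrad}, and then regroup the state sum. You are in fact more explicit than the paper about the naturality step (that $\ov{F}$ on gauge classes matches the map induced by $Bf$ on homotopy classes, compatibly with the boundary condition), which the paper simply asserts by citing Proposition~\ref{pro:jau}.
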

\begin{proof}
Let us recall that for every coloring $c_0\in Col(T_0)$ and for every homotopy class $x\in [M,BH]_{\Sigma,x_0}$ where $x_0\in [\Sigma,BH]$ is the homotopy class obtained from $c_0$ the vector $HTV^{(H,q)}_{\C}(M,x,c_0)$ is the state sum:
$$
HTV^{(H,q)}_{\C}(M,x,c_0)=\dc^{-n_0(T)+n_0(T_0)/2}\sum_{c\in Col_{(H,q),c_0,x}}w_c W_c
$$
Using Lemma \ref{lem:mapgrad}, we have the map:
\begin{align*}
\ov{F} : Col_G(T)/\m{G}^G_T&\fd Col_H(T)/\m{G}^H_T\\
[c] &\ap [fc]\, ,
 \end{align*}
 the map $\ov{F}$ induces a map $F : [M:BG]\fd [M,BH]$ (Proposition \ref{pro:jau}). It follows that for every $c\in Col_{(H,q),x,c_0}(T)$ we have : $c\in Col_{c_0}(T)$ and $\ov{F}([pc])=[fpc]=[qc]$ thus the homotopy class $y\in [M,BG]$ defined by $[pc]$ belongs to the set $F^{-1}(x)$. We have shown that : $\dis{Col_{(H,q),x,c_0}(T)\subset \coprod_{y\in F^{-1}(x)}Col_{(G,p),y,c_0}(T)}$. Let us show that for every $y\in F^{-1}(x)$ and for every $c_0\in Col(T_0)$ we have : $Col_{(G,p),y,c_0}(T)\subset Col_{(H,q),x,c_0}(T)$. Let $c\in Col_{(G,p),y,c_0}(T)$, it follows that $c\in Col_{c_0}(T)$ and $[qc]=[fpc]=\ov{F}([pc])$, since $y\in F^{-1}(x)$ one gets that the homotopy classes defined from the class $[qc]$ is $x$. It follows:
\begin{align*}
HTV_{C}^{(H,q)}(M,x,c_0)&=\dc^{-n_0(T)+n_0(T_0)/2}\sum_{c\in Col_{(H,q),c_0,x}}w_c W_c \\
&=\dc^{-n_0(T)+n_0(T_0)/2}\sum_{y\in F^{-1}(x)}\sum_{c\in Col_{(G,p),c_0,y}}w_c W_c \\
&= \sum_{y\in F^{-1}(x)}HTV_{\C}^{(G,p)}(M,y,c_0)\, .
\end{align*}
\end{proof}
Notice that if we consider the trivial graduation we obtain the Turaev-Viro invariant.

By definition of the universal graduation and using Theorem \ref{thm:decompmorph}, we can conclude that the splitting given by $HTV_{\C}^{(\grad,\cs{?})}$ is maximal.
\begin{corollary}\label{cor:maxHTV}
Let $\C$ be a spherical category with an invertible dimension, $M$ be a 3-manifold, $\Sigma$ be the boundary of $M$ and $T_0$ be a triangulation of $\Sigma$. For every graduation $(G,p)$ of $\C$, one gets:
$$
TV_{C}(M,c_0)=\sum_{x\in [M,BG]}HTV_{\C}^{(G,p)}(M,x,c_0)\in V_{\C}(\Sigma,T_0,c_0)\,
$$
with $c_0\in Col(T_0)$, and
$$
HTV_{\C}^{(G,p)}(M,x,c_0)=\sum_{y\in F^{-1}(x)}HTV^{(\grad,\cs{?})}(M,y,c_0)\,,
$$
where $F$ is the map induced by the universal graduation $(\grad,\cs{?})$.
\end{corollary}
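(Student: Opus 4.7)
The plan is to obtain both identities as direct specializations of earlier results. The first equation is nothing more than the splitting formula (\ref{TVsplit}) of Theorem \ref{thm:inv} applied to the graduation $(G,p)$; the enlargement of the index set from $[M,BG]_{\Sigma,x_{c_0}}$ to all of $[M,BG]$ is harmless because $Col_{(G,p),c_0,x}(T)$ is empty whenever $x$ does not restrict to $x_{c_0}$ on $\Sigma$, so only those homotopy classes already appearing in Theorem \ref{thm:inv} contribute a nonzero term.

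For the second equation, I would begin by invoking the universal property of Proposition \ref{pro:graduation}: it supplies a unique group morphism $f : \grad \fd G$ satisfying $p = f \circ \cs{?}$. This relation is precisely the defining condition for $f$ to be a morphism of graduation from $(\grad,\cs{?})$ to $(G,p)$ in the sense introduced at the beginning of Section \ref{sec:TVhommax}. I would then feed this morphism into Theorem \ref{thm:decompmorph}, taking $(\grad,\cs{?})$ as the source graduation and $(G,p)$ as the target; the theorem then yields
\[
HTV_{\C}^{(G,p)}(M,x,c_0) \;=\; \sum_{y \in F^{-1}(x)} HTV_{\C}^{(\grad,\cs{?})}(M,y,c_0),
\]
where $F : [M,B\grad] \fd [M,BG]$ is the map induced by $f$ through Lemma \ref{lem:mapgrad} together with the topological identification of Proposition \ref{pro:jau}.

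I do not expect any substantive difficulty here: once one recognises that the universal graduation is the unique initial object in the category of graduations of $\C$, the corollary reduces to a direct application of Theorem \ref{thm:decompmorph} to the canonical morphism out of the universal graduation. The only genuine bookkeeping is to match the naming conventions of Theorem \ref{thm:decompmorph} with those used in the statement of the corollary and to verify that the map $F$ produced in this way coincides with the one described there; both checks are immediate from the constructions.
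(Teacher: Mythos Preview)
Your proposal is correct and follows the same approach as the paper: the corollary is stated immediately after Theorem \ref{thm:decompmorph} with the single line ``By definition of the universal graduation and using Theorem \ref{thm:decompmorph}'', which is precisely your argument of applying Theorem \ref{thm:decompmorph} to the canonical morphism $f:\grad\to G$ furnished by Proposition \ref{pro:graduation}. Your remark that the first identity is just equation~(\ref{TVsplit}) with a harmlessly enlarged index set is also correct and matches the paper's treatment, where that identity is already recorded in Theorem \ref{thm:inv}.
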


\subsection*{Example}

Lens spaces $L(p,q)$, with 0<q<p and (p,q)=1, are oriented compact 3-manifolds, which result from identifying on the sphere $S^3=\{(x,y)\in \Cc^2\mid \cs{x}^2+\cs{y}^2=1\}$  the points which belong to the same orbit under the action of the cyclic group $\Zz_p$ defined by $(x,y)\ap (wx,w^qy)$ with $w=exp(2i\pi/p)$.

A singular triangulation of $L(p,q)$ is obtained by gluing together $p$ tetrahedra $(a_i,b_i,c_i,d_i)$, $i=0,...,p-1$ according to the following identification of faces $(i+1$ and $i+q$ are understood modulo p):
\begin{eqnarray}
(a_i,b_i,c_i)&=&(a_{i+1},b_{i+1},c_{i+1}) \label{i+1}\\
(a_i,b_i,c_i)&=&(b_{i+q},c_{i+q},d_{i+q}) \label{i+q}
\end{eqnarray}
The identification of (\ref{i+1}) can be realized by embedding the $p$ tetrahedra in Euclidean three-space, leading to a prismatic solid with $p+2$ 0-simplexes $a,b,c_i$, $2p$ external faces, $3p$ external edges and one internal axis $(a,b)$. Then formula (\ref{i+q}) is interpreted as the identification of the surface triangles $(a,c_i,c_{i+1})$ and $(b,c_{i+q},c_{i+1+q})$. A coloring of $L(p,q)$ is determined by the colors of the edges : $(ab)$, $(c_ic_{i+1})$ and $(bc_i)$ such that the triple is admissible. From now on, a coloring $c$ of $L(p,q)$ will be denoted by $(c(ab),c(c_ic_{i+1}),c(bc_i))$ .

In \cite{HQFTp}, we have shown that for the category of representation of $U_q(\mathfrak{sl}_2)$ with $q$ root of unity, there are two homotopical classes in $[L(p,q),B\Zz_2]$ and we have:
\begin{align}
TV_{U_q(\mathfrak{sl}_2)}(L(p,q))&=\Delta_{U_q(\mathfrak{sl}_2)}^{-2}\sum_{c=(X,Z,Y_i)}w_cW_c \nonumber\\
&= \Delta_{U_q(\mathfrak{sl}_2)}^{-2}\left(\sum_{\substack{c=(X,Z,Y_i)\\ \cs{X}=1}}w_cW_c+\sum_{\substack{c=(X,Z,Y_i)\\\cs{X}=-1}}w_cW_c\right)\, .
\end{align}
 where $HTV_0(L(p,q))$ (resp. $HTV_1(L(p,q))$) is the state sum $\dis{\Delta_{U_q(\mathfrak{sl}_2)}^{-2}\sum_{\substack{c=(X,Z,Y_i)\\ \cs{X}=1}}w_cW_c}$ (resp. $\dis{\Delta_{U_q(\mathfrak{sl}_2)}^{-2}\sum_{\substack{c=(X,Z,Y_i)\\ \cs{X}=-1,\cs{X}^p=1}}w_cW_c}$). The state sum $HTV_0$ is the homotopical Turaev-Viro invariant for the trivial homotopy class, and $HTV_1$ is the homotopical Turaev-Viro obtained for the other homotopy class.

 Let us describe the decomposition of the homotopical Turaev-Viro invariant defined for a graduation $(G,p)$ of $U_q(\frak{sl}_2)$. Using the universal property of the graduator, one get a morphism of graduation: $f : (G,p)\fd (\Zz_2,\cs{?})$. This morphism induces a map: $F : [L(p,q),B\Zz_2]\fd [L(p,q),BG]$ and Corollary \ref{cor:maxHTV} gives the following equality:
 $$
 HTV_{U_q(\frak{sl}_2)}^{(G,p)}(M,x,c_0)=\sum_{y\in F^{-1}(x)}HTV_{U_q(\frak{sl}_2)}^{(\grad,\cs{?})}(M,y,c_0)\,,
 $$

\section{The Turaev-Viro HQFT}\label{sec:TVHQFT}

In the present Section, we recall the construction of the Turaev-Viro TQFT and we will show that for every graduation of a spherical category, we can obtain a Turaev-Viro HQFT which splits the Turaev-Viro TQFT. Furthermore we will show that the splitting obtained using the universal graduation is maximal. Throughout this Section $\C$ will be a spherical category.

\subsection{The Turaev-Viro TQFT}

\subsection*{Cobordisms category}

Let $\Sigma$ and $\Sigma'$ be two oriented closed surfaces, a \emph{cobordism from $\Sigma$ to $\Sigma'$} is a 3-manifold whose boundary is the disjoint union : $\overline{\Sigma}\coprod \Sigma'$. Let $M$ and $M'$ be two cobordisms from $\Sigma$ to $\Sigma'$, $M$ and $M'$ are equivalents if there exists an isomorphism between $M$ and $M'$ such that it preserves the orientation and its restriction to the boundary is the identity.

The \emph{cobordism category} is the category where objects are closed and oriented surfaces and morphisms are equivalent classes of cobordisms. The cobordism category is denoted by $Cob_{1+2}$. The disjoint union and the empty manifold $\emptyset$ define a strict monoidal structure on $Cob_{1+2}$.

\subsection*{TQFT}
A \emph{TQFT} is a monoidal functor from the cobordism category  to the category of finite dimensional vector spaces.

\medskip

Let us recall the construction of the Turaev-Viro TQFT. Let $\Sigma$ be an oriented closed surface and $T$ be a triangulation of $\Sigma$. We associate to the pair $(\Sigma,T)$ a vector space $\dis{V_{\C}(\Sigma,T)=\bigoplus_{c\in Col(T)}\bigotimes_{f\in T_0^2}V(f,c)}$, where $V(f,c)=\ho{\Bbbi}{c(01)\pt c(12)\pt c(20)}$ for every $f=(012)$. The vector space $V(f,c)$ does not depend on the choice of a numbering which respects the orientation. Since the category $\C$ is the semi-simple, the vector space $V_{\C}(\Sigma,T)$ is dual to $V_{\C}(\ov{\Sigma},T)$, the duality is induced by the trace of the category (\cite{Tu}, \cite{GK} and \cite{HQFTp}).

Let $\Sigma$ (resp. $\Sigma'$) be an oriented surface endowed with a triangulation $T$ (resp. $T'$) and $M$ be a cobordism from $\Sigma$ to $\Sigma'$, for every colorings $c\in Col(T)$ and $c'\in Col(T')$ we have the following vector : $TV_{\C}(M,c,c')\in V_{\C}(\ov{\Sigma},T,c)\pt V_{\C}(\Sigma',T',c')\cong V_{\C}(\Sigma,T,c)^*\pt V_{\C}(\Sigma',T',c')$. The vector spaces $V_{\C}(\Sigma,T,c)$ and $V_{\C}(\Sigma',T',c')$ are finite dimensional vector spaces, thus we can build the following linear map :
$$\ov{TV_{\C}}(M)_{c,c'} : V_{\C}(\Sigma,T,c)\fd V_{\C}(\Sigma',T',c')\, ,$$
thus the matrix $\bigg( \ov{TV_{\C}}(M)_{c,c'}\bigg)_{c\in Col(T),c'\in Col(T')}$ defines the following linear map :
$$[M]= \bigg( \ov{TV_{\C}}(M)_{c,c'}\bigg)_{c\in Col(T),c'\in Col(T')}: V_{\C}(\Sigma,T)\fd V_{\C}(\Sigma',T')\, .$$
By construction of the Turaev-Viro invariant (Theorem 1.8 \cite{Tu}), we have the following relation : $[M'\cup_{\Sigma'}M]=[M']\circ[M]$ and the map $[\Sigma\times I] : V_{\C}(\Sigma,T) \fd V_{\C}(\Sigma,T)$ is an idempotent denoted by $p_{\Sigma,T}$. We set $\m{V}_{\C}(\Sigma,T)=\im(p_{\Sigma,T})$ and for every cobordism $M : \Sigma\fd \Sigma'$ we denote by $\m{V}_{\C}(M)=[M]_{\im(p_{\Sigma,T})}$ the restriction of $[M]$ to $\im(p_{\Sigma,T})$. It follows that the vector space $\m{V}_{\C}(\Sigma,T)$ is independent on the choice of the triangulation $T$. From now on, we will denote by $\m{V}_{\C}$ the Turaev-Viro TQFT, for every closed surface $\Sigma$ we denote by $\m{V}_{\C}(\Sigma)$ the vector space associated to $\Sigma$ and for every cobordism $M$ we denote by $\m{V}_{\C}(M)$ the linear map associated to $M$.

\subsection{The Turaev-Viro HQFT}

\subsection*{$B$-manifolds}

Let $B$ be a $d$-dimensional manifold, a \emph{$d$-dimensional $B$-manifold} is a pair $(X,g)$ where $X$ is closed $d$-manifold and $g : X\fd B$ is a continuous map called \emph{characteristic map}.

A \emph{$B$-cobordism from $(X,g)$ to $(Y,h)$} is a pair $(W,F)$ where $W$ is a cobordism from $X$ to $Y$ and $F$ is a relative homotopy class of a map from $W$ to $B$ such that the restriction to $X$ (resp. $Y$) is $g$ (resp. $h$). From now on, we make no notational distinction between a (relative) homotopy class  and any of its representatives.

Let $(W,F) : (M,g)\fd (N,h)$ and $(W',F') : (N',h')\fd (P,k)$ be two $B$-cobordisms and $\Psi : N\fd N'$ be a diffeomorphism such that $h'\psi=h$. The composition of  $B$-cobordisms is defined in the following way:~$(W',F')\circ(W,F)=(W'\cup W,F.F')$, where $F.F'$ is the following homotopy class :

$$
F.F'(x) = \left\{
\begin{array}{cc}
F(x) & x\in W \\
F'(x) & x\in W'\\
\end{array}
\right.
$$

Since $h'\Psi=h$, the map $F.F'$ is well defined.

The identity of $(X,g)$ is the $B$-cobordism $(X\times I,1_g)$, with $1_g$ the homotopy class of the map:
\begin{align*}
X\times I &\fd B\\
(x,t) &\ap g(x)
\end{align*}

The disjoint union of $B$-cobordisms is defined in the same way of disjoint union of cobordisms.

The \emph{category of $d+1$ $B$-cobordisms} is the category whose objects are $d$-dimensional $B$-manifolds and morphisms are isomorphism classes of $B$-cobordisms. The category of $d+1$ $B$-cobordism is denoted by $Hcob(B,d+1)$, this is a strict monoidal category.

\subsection*{HQFTs}

A \emph{$d+1$ dimensional HQFT with target space $B$} is a monoidal functor from the category $Hcob(d+1,B)$ to the category of finite dimensional vector spaces.

The vector space obtained from a $B$-manifold only depends (up to isomorphism) on the manifold and the homotopy class of the characteristic map (\cite{HQFTp}).

\subsection{The construction of the Turaev-Viro HQFT}

In \cite{HQFTp}, we have built the Turaev-Viro HQFT using the universal graduation. To build this HQFT we use the homotopical Turaev-Viro invariant $HTV^{(\grad,\cs{?})}_{\C}$. Since we have built an homotopical Turaev-Viro invariant for every graduation $(G,p)$ of a spherical category $\C$, we will obtain in the same way a Turaev-Viro HQFT. In this case the target space will the classifying space of the group of the graduation. Throughout this Section, $(G,p)$ will be a graduation $\C$.

\medskip

From now on, for every homotopy classes $x\in [\Sigma,BG]$ and $x'\in [\Sigma',BG]$ we denote by $[M,BG]_{(\Sigma,x),(\Sigma',x')}$ the set of homotopy classes of $[M,BG]$ such that the homotopy class of the restriction to $\Sigma$ (resp. $\Sigma'$) is $x$ (resp. $x'$).

For every oriented surface $\Sigma$ endowed with a triangulation $T$, we have the following decomposition:
$$V_{\C}(\Sigma,T)=\bigoplus_{x\in [\Sigma,BG]}\bigoplus_{c\in Col_{(G,p),x}(T)}V_{\C}(\Sigma,T,c)=\bigoplus_{x\in [\Sigma,BG]}V_{\C}(\Sigma,T,x) \, ,$$
with $V_{\C}(\Sigma,T,x)=\dis{\bigoplus_{c\in Col_{(G,p),x}(T)}V_{\C}(\Sigma,T,c)}$.

\medskip

Let $M$ be a cobordism from $(\Sigma,T)$ to $(\Sigma',T')$, $c$ be a coloring of $T$ and $c'$ be a coloring of $T'$. For every homotopy class $y\in [M,BG]_{(\Sigma,x_c),(\Sigma',x_{c'})}$, the vector $HTV^{(G,p)}_{\C}(M,y,c,c')\in V_{\C}(\Sigma,T,c)^*\pt V_{\C}(\Sigma',T',c')$ induces the following linear map :
$$
\ov{HTV^{(G,p)}_{\C}}(M,y,c,c') : V_{\C}(\Sigma,T,c) \fd V_{\C}(\Sigma,T',c')\, .
$$

Let $x\in [\Sigma,BG]$ and $x'\in [\Sigma,BG]$, for every $y\in [M,BG]_{(\Sigma,x),(\Sigma',x')}$ the matrix $\bigg(\ov{HTV^{(G,p)}_{\C}}(M,y,c,c')\bigg)_{c\in Col_x(T), c'\in Col_x(T')}$ defines a map from $V_{\C}(\Sigma,T,x)$ to $V_{\C}(\Sigma',T',x')$:

$$
\bigg(\ov{HTV^{(G,p)}_{\C}}(M,y,c,c')\bigg)_{c\in Col_x(T), c'\in Col_x(T')} : V_{\C}(\Sigma,T,x) \fd V_{\C}(\Sigma',T',x')\, .
$$
This map is denoted by $\widetilde{HTV^{(G,p)}_{\C}}(M,y)_{x,x'}$.

\medskip

Let $\Sigma$ be a closed and oriented surface, the inclusion $\Sigma \hookrightarrow \Sigma\times I$ is a deformation retract, thus there exists a unique homotopy class $y\in [\Sigma\times I,BG]$ such that the homotopy class of the restriction to $\Sigma\times \{0\}$ is $x$. More precisely, $y$ is the homotopy class of the following map :
\begin{align*}
\Sigma \times I &\fd BG \\
(z,t) & \ap x(z).
\end{align*}

and  we have: $[\Sigma\times I]_{(\Sigma,x),(\Sigma',x')}=\left\{\begin{array}{cc}
1_x & \mbox{if $x=x'$}\, ,\\
\emptyset & \mbox{otherwise}
\end{array}\right.$.  We denoted by $p^{(G,p)}_{\Sigma,T,x}$ the idempotent $\widetilde{HTV^{(G,p)}_{\C}}(\Sigma\times I,1_x)_{x,x}$.

\medskip

For every closed surface $\Sigma$ endowed with a triangulation $T$, we set : $\m{W}^{(G,p)}_{\C}(\Sigma,T,x)=\im(p^{(G,p)}_{\Sigma,T,x})\,.$ Let $M$ be a cobordism from $(\Sigma,T)$ to $(\Sigma',T')$, for every $x\in [\Sigma,BG]$, $x'\in [\Sigma',BG]$ and $y\in [M,BG]_{(\Sigma,x),(\Sigma',x')}$, we denote by $\m{W}^{(G,p)}_{\C}(M,y)_{x,x'}$ the restriction of $\widetilde{HTV_{\C}}(M,y)_{x,x'}$ to the vector spaces $\m{W}^{(G,p)}_{\C}(\Sigma,T,x)$ and $\m{W}^{(G,p)}_{\C}(\Sigma',T',x')$.
For every closed surface $\Sigma$ and for every triangulations $T$ and $T'$ of $\Sigma$, the linear map $\m{W}^{(G,p)}_{\C}(\Sigma\times I,1_x)_{x,x} : \m{W}^{(G,p)}_{\C}(\Sigma,T,x)\fd \m{W}^{(G,p)}_{\C}(\Sigma,T',x)$ is an isomorphism. Thus the space $\m{W}^{(G,p)}_{\C}(\Sigma,T,x)$ doesn't depend on the choice of the triangulation.

Similarly to \cite{HQFTp}, where the HQFT is obtained from $HTV_{\C}^{(\grad,\cs{?})}$, we have the following HQFT:

\begin{theorem}\label{HQFT}
Let $\C$ be a spherical category and $(G,p)$ be a graduation on $\C$. We set :
\begin{align}
\m{H}^{(G,p)}_{\C} : Hcob(BG,2+1) & \fd \textrm{vect}_{\Bbbk} \label{HQFT}\\
(\Sigma,g) & \ap \m{W}^{(G,p)}_{\C}(\Sigma,g),\nonumber\\
(M,F) & \ap  \m{W}^{(G,p)}_{\C}(M,F),\nonumber
\end{align}
where the vector space $\m{W}^{(G,p)}_{\C}(\Sigma,g)$ is defined for the homotopy class of $g$. The functor $\m{H}^{(G,p)}_{\C}$ is a $2+1$ dimensional HQFT with target space the classifying space $BG$.
\end{theorem}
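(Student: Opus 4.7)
The plan is to verify the axioms of an HQFT: that $\m{H}^{(G,p)}_{\C}$ is well defined on objects and morphisms of $Hcob(BG,2+1)$, that it respects identities and composition, and that it is monoidal. Essentially every step is a direct transcription of the proof in \cite{HQFTp} for the universal graduation $(\grad,\cs{?})$, with the only substantive change being that the homotopy classes now live in $[-,BG]$ rather than $[-,B\grad]$; this is legitimate because the only property of $\cs{?}$ used in that proof is that it is a graduation, and by hypothesis $p$ is too.

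First I would settle well-definedness on objects. The vector space $\m{W}^{(G,p)}_{\C}(\Sigma,g)$ was defined as $\im(p^{(G,p)}_{\Sigma,T,x})$ where $x=[g]$. Independence of the triangulation $T$ was already recorded just before the statement, via the isomorphism $\m{W}^{(G,p)}_{\C}(\Sigma\times I,1_x)_{x,x}$. Independence of the representative $g$ within its homotopy class is built into the definition, since $x$ appears only through the set $Col_{(G,p),x}(T)$ indexing the direct sum. The identity axiom $\m{H}^{(G,p)}_{\C}(\Sigma\times I,1_g)=\id$ then follows from the fact that the restriction of an idempotent to its image is the identity map.

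The main step is the composition axiom. Given $BG$-cobordisms $(M,F):(\Sigma,g)\fd(\Sigma',g')$ and $(M',F'):(\Sigma',g')\fd(\Sigma'',g'')$, I would prove the gluing identity
\[
HTV^{(G,p)}_{\C}(M'\cup M,F.F',c_0,c_0'')
=\sum_{c_0'\in Col_{g'}(T')} HTV^{(G,p)}_{\C}(M',F',c_0',c_0'')\cdot HTV^{(G,p)}_{\C}(M,F,c_0,c_0'),
\]
where the contraction over $c_0'$ uses the duality $V_{\C}(\Sigma',T',c_0')^*\cong V_{\C}(\ov{\Sigma'},T',c_0')$. The classical gluing formula for $TV_{\C}$ (\cite{Tu}, \cite{BW}) gives this identity after summing over $c_0'$ and over all middle homotopy classes; the refinement that the decomposition by homotopy classes is compatible with gluing comes from a van Kampen-type observation: under the bijection of Proposition \ref{pro:colbound}, a coloring $c$ of a triangulation of $M'\cup M$ restricts to colorings on each piece whose classes $[pc|_M]$ and $[pc|_{M'}]$ agree on $\Sigma'$ and concatenate to $[pc]$. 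Summing the gluing identity over the appropriate homotopy classes and passing to images of the idempotents $p^{(G,p)}_{-,T,-}$ yields $\m{H}^{(G,p)}_{\C}(M'\cup M,F.F')=\m{H}^{(G,p)}_{\C}(M',F')\circ\m{H}^{(G,p)}_{\C}(M,F)$. This is the step I expect to be the main obstacle, not because the argument is genuinely different from \cite{HQFTp} but because it requires carefully keeping track of the restriction maps $[M'\cup M,BG]_{\Sigma,\Sigma''}\fd[M,BG]_{\Sigma,\Sigma'}\times_{[\Sigma',BG]}[M',BG]_{\Sigma',\Sigma''}$ and verifying these are bijections in the relative setting.

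Finally, monoidality. For the empty surface, $[\emptyset,BG]$ is a singleton and $\m{W}^{(G,p)}_{\C}(\emptyset)=\Bbbk$ by the usual state-sum conventions. For a disjoint union $(\Sigma_1,g_1)\sqcup(\Sigma_2,g_2)$, the bijection $[\Sigma_1\sqcup\Sigma_2,BG]\cong [\Sigma_1,BG]\times[\Sigma_2,BG]$ together with multiplicativity of the state-sum over disjoint unions of triangulations gives $V_{\C}(\Sigma_1\sqcup\Sigma_2,T_1\sqcup T_2,(x_1,x_2))\cong V_{\C}(\Sigma_1,T_1,x_1)\pt V_{\C}(\Sigma_2,T_2,x_2)$, and the cylinder idempotent factors through this isomorphism, so passing to images yields the required natural isomorphism on objects. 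The analogous identity for the cobordism-level maps is immediate from $HTV^{(G,p)}_{\C}(M_1\sqcup M_2,y_1\sqcup y_2,-)=HTV^{(G,p)}_{\C}(M_1,y_1,-)\pt HTV^{(G,p)}_{\C}(M_2,y_2,-)$, which in turn follows from the analogous identity for $TV_{\C}$ since the $(G,p)$-coloring of a disjoint union is a pair of $(G,p)$-colorings. This completes the verification of the HQFT axioms.
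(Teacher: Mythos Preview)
Your proposal is correct and follows exactly the approach the paper takes: the paper gives no detailed proof but simply states ``Similarly to \cite{HQFTp}, where the HQFT is obtained from $HTV_{\C}^{(\grad,\cs{?})}$, we have the following HQFT,'' and your sketch faithfully unpacks what that transcription entails. If anything, you give more detail than the paper itself, but the underlying strategy---replace $\cs{?}$ by $p$ and $\grad$ by $G$ throughout the proof in \cite{HQFTp}, noting that only the graduation axioms are used---is identical.
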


To obtain the splitting of the Turaev-Viro TQFT, we will use the splitting of the idempotent which defines the Turaev-Viro TQFT \cite{HQFTp}.

\begin{lemma}\label{lem:idemspli}
Let $\C$ be  a spherical category, $(G,p)$ be a graduation  of $\C$. For every surface $\Sigma$ endowed with a triangulation $T$, we have :
\begin{equation*}
p_{\Sigma,T}=\bigoplus_{x\in [\Sigma,BG]}p^{(G,p)}_{\Sigma,T,x}.
\end{equation*}
\end{lemma}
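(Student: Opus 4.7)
The plan is to analyze the block structure of the idempotent $p_{\Sigma,T}=[\Sigma\times I]$ with respect to the decomposition $V_{\C}(\Sigma,T)=\bigoplus_{x\in[\Sigma,BG]}V_{\C}(\Sigma,T,x)$ recalled just above the lemma, identifying the $x$-th diagonal block with $p^{(G,p)}_{\Sigma,T,x}$ and showing that all off-diagonal blocks vanish.

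First I would apply Theorem \ref{thm:inv} to the cobordism $M=\Sigma\times I$, whose boundary $\ov{\Sigma}\sqcup\Sigma$ carries the triangulation $T\sqcup T$. For every pair of colorings $c,c'\in Col(T)$ the theorem yields
$$
TV_{\C}(\Sigma\times I,(c,c'))=\sum_{y\in[\Sigma\times I,BG]_{(\Sigma,x_c),(\Sigma,x_{c'})}}HTV^{(G,p)}_{\C}(\Sigma\times I,y,(c,c'))\,,
$$
and passing through the vector-to-operator construction used to define the matrix entries of $[\Sigma\times I]$ this yields the blockwise identity $[\Sigma\times I]_{c,c'}=\sum_{y}\ov{HTV^{(G,p)}_{\C}}(\Sigma\times I,y,c,c')$.

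The key topological input is that the inclusion $\Sigma\times\{0\}\hookrightarrow \Sigma\times I$ is a deformation retract, as already invoked in Section \ref{sec:TVHQFT} to define $1_x$; hence any continuous map $\Sigma\times I\fd BG$ is determined up to homotopy by its restriction to $\Sigma\times\{0\}$, and the set $[\Sigma\times I,BG]_{(\Sigma,x),(\Sigma,x')}$ is empty unless $x=x'$, in which case it consists solely of the class $1_x$. Therefore for $c\in Col_{(G,p),x}(T)$ and $c'\in Col_{(G,p),x'}(T)$ one has $[\Sigma\times I]_{c,c'}=0$ whenever $x\neq x'$, and $[\Sigma\times I]_{c,c'}=\ov{HTV^{(G,p)}_{\C}}(\Sigma\times I,1_x,c,c')$ when $x=x'$. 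Reassembling these blocks along $V_{\C}(\Sigma,T)=\bigoplus_x V_{\C}(\Sigma,T,x)$ then produces
$$
p_{\Sigma,T}=\bigoplus_{x\in[\Sigma,BG]}\widetilde{HTV^{(G,p)}_{\C}}(\Sigma\times I,1_x)_{x,x}=\bigoplus_{x\in[\Sigma,BG]}p^{(G,p)}_{\Sigma,T,x}\,,
$$
which is the stated identity. The only non-bookkeeping step is the deformation-retract argument that eliminates the off-diagonal blocks; everything else follows immediately from Theorem \ref{thm:inv} and from the definitions recalled just before the lemma.
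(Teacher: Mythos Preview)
Your proof is correct and follows essentially the same route as the paper: apply the splitting formula of Theorem~\ref{thm:inv} to $M=\Sigma\times I$, use the deformation-retract argument to see that $[\Sigma\times I,BG]_{(\Sigma,x),(\Sigma,x')}$ is $\{1_x\}$ when $x=x'$ and empty otherwise, and read off the block-diagonal structure of $p_{\Sigma,T}$. The paper's argument is somewhat terser but uses exactly the same two ingredients in the same order.
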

\begin{proof}
For every 3-manifold $M$ with boundary $\Sigma$, for every triangulation $T$ of $\Sigma$ and for every coloring $c\in Col(T)$, we have : $\dis{TV_{\C}(M,c)=\sum_{x\in [M,BG]_{\Sigma,x_c}}HTV^{(G,p)}_{\C}(M,x,c)}$. if $M=\Sigma\times I$, then we have  $[\Sigma\times I]_{(\Sigma,x),(\Sigma',x')}=\left\{\begin{array}{cc}
1_x & \mbox{if $x=x'$}\, ,\\
\emptyset & \mbox{otherwise}
\end{array}\right.$. It follows that if $c,c'\in Col^{(G,p)}_x(T)$ then $TV_{\C}(\Sigma\times I,c,c')=HTV^{(G,p)}_{\C}(\Sigma\times I,1_{x},c,c')$ and if $c\in Col^{(G,p)}_x(T)$ and $c'\in Col^{(G,p)}_{x'}(T)$ with $x\not=x'$ then $TV_{\C}(\Sigma\times I,c,c')=0$. one gets  $\dis{p_{\Sigma,T}=\bigoplus_{x\in [\Sigma, BG]}p^{(G,p)}_{\Sigma,T,x}}$.
\end{proof}

Using Lemma \ref{lem:idemspli} and Theorem \ref{TVsplit}, one gets that for every graduation of a spherical category gives a decompositions of the Turaev-Viro TQFT and the blocks come from an HQFT, whose target space is given by the classifying space of the graduation.

\begin{theorem}\label{thm:splitTV}
Let $\C$ be a spherical category with an invertible dimension and $(G,p)$ be a graduation of $\C$. The Turaev-Viro TQFT $\m{V}_{\C}$ is obtained from the HQFT $\m{H}^{(G,p)}_{\C}$  :

$$
\m{V}_{\C}(\Sigma)=\bigoplus_{x\in [\Sigma,BG]}\m{H}_{\C}(\Sigma,x).
$$

For every cobordism $M : \Sigma_0 \fd \Sigma_1$ and for every $x_0\in [\Sigma_0,BG]$, $x_1\in [\Sigma_1,BG]$, we denote by $\m{V}_{\C}(M)_{x_0,x_1}$ the following restriction of the map $\m{V}_{\C}(M)$:
$$
\xymatrix{
\m{V}_{\C}(\Sigma_0) \ar[r]^{\m{V}_{\C}(M)} & \m{V}_{\C}(\Sigma_1) \\
\m{V}_{\C}(\Sigma_0,x_0) \ar@{^{(}->}[u]  \ar[r]_{\m{V}_{\C}(M)_{x_0,x_1}} & \m{V}_{\C}(\Sigma_1,x_1) \ar@{^{(}->}[u] \, .
}
$$
We have the following splitting:
\begin{equation}
\m{V}_{\C}(M)_{x_0,x_1}= \bigoplus_{y\in [M,BG]_{(\Sigma_0,x_0),(\Sigma_1,x_1)}}\m{H}_{\C}(M,y)_{x_0,x_1},\label{scindage:mo}
\end{equation}
\end{theorem}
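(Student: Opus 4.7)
The plan is to combine Lemma \ref{lem:idemspli} with Theorem \ref{thm:inv} applied to cobordisms, handling first the vector spaces attached to surfaces and then the linear maps attached to cobordisms.

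\textbf{Step 1 (objects).} Fix a triangulation $T$ of $\Sigma$. By construction, $\m{V}_{\C}(\Sigma)=\im(p_{\Sigma,T})$ inside $V_{\C}(\Sigma,T)=\bigoplus_{x\in[\Sigma,BG]}V_{\C}(\Sigma,T,x)$. Each idempotent $p^{(G,p)}_{\Sigma,T,x}$ acts as zero outside the summand $V_{\C}(\Sigma,T,x)$, since $\widetilde{HTV^{(G,p)}_{\C}}(\Sigma\times I,1_x)_{x',x''}$ vanishes when $x'\neq x$ or $x''\neq x$ (the homotopy class $1_x$ does not extend across distinct boundary classes). Hence the family $\{p^{(G,p)}_{\Sigma,T,x}\}_{x}$ is block-diagonal and pairwise orthogonal, and Lemma \ref{lem:idemspli} forces $\im(p_{\Sigma,T})=\bigoplus_{x}\im(p^{(G,p)}_{\Sigma,T,x})=\bigoplus_{x}\m{H}^{(G,p)}_{\C}(\Sigma,x)$.

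\textbf{Step 2 (morphisms).} Given a cobordism $M:\Sigma_0\fd\Sigma_1$ with triangulations $T_i$ of $\Sigma_i$ extended to a triangulation $T$ of $M$, fix $c_0\in Col(T_0)$ and $c_1\in Col(T_1)$ and regard $M$ as a 3-manifold whose boundary is $\ov{\Sigma_0}\coprod\Sigma_1$. Theorem \ref{thm:inv} applied to the pair boundary-coloring $(c_0,c_1)$ gives
\begin{equation*}
TV_{\C}(M,c_0,c_1)=\sum_{y\in[M,BG]_{\partial}}HTV^{(G,p)}_{\C}(M,y,c_0,c_1),
\end{equation*}
where the summation runs over those $y$ whose restrictions to $\Sigma_0$ and $\Sigma_1$ are $x_{c_0}$ and $x_{c_1}$ respectively. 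In particular, only colorings with $c_0\in Col_{(G,p),x_0}(T_0)$ and $c_1\in Col_{(G,p),x_1}(T_1)$ contribute nontrivially to the $(x_0,x_1)$-block, and for such colorings the classes $y$ that appear are exactly those in $[M,BG]_{(\Sigma_0,x_0),(\Sigma_1,x_1)}$. Passing from vectors to matrices via the nondegenerate pairing on $V_{\C}(\Sigma_i,T_i)$, this identity lifts to the equality
\begin{equation*}
[M]_{x_0,x_1}=\sum_{y\in[M,BG]_{(\Sigma_0,x_0),(\Sigma_1,x_1)}}\widetilde{HTV^{(G,p)}_{\C}}(M,y)_{x_0,x_1}
\end{equation*}
of linear maps $V_{\C}(\Sigma_0,T_0,x_0)\fd V_{\C}(\Sigma_1,T_1,x_1)$.

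\textbf{Step 3 (restriction to the images).} By Step 1, restricting to the image of $p_{\Sigma_i,T_i}$ amounts, block by block, to restricting to the image of $p^{(G,p)}_{\Sigma_i,T_i,x_i}$. Performing this restriction on both sides of the previous equality identifies the left-hand side with $\m{V}_{\C}(M)_{x_0,x_1}$ by definition, and identifies each term on the right with $\m{H}^{(G,p)}_{\C}(M,y)_{x_0,x_1}$, since the latter is precisely the definition of the HQFT morphism coming from Theorem \ref{HQFT}. Summing over $y$ yields (\ref{scindage:mo}), and together with Step 1 this completes the proof.

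\textbf{Main obstacle.} The essential content is already packaged in Lemma \ref{lem:idemspli} and Theorem \ref{thm:inv}; the real work is bookkeeping, namely checking that the two restrictions commute: restricting to a fixed boundary homotopy class $x_0,x_1$ on each side, and restricting to the image of the TQFT idempotent. This commutation is exactly what Step 1 provides via the orthogonality of the $p^{(G,p)}_{\Sigma,T,x}$, so no new argument is needed beyond carefully tracking indices.
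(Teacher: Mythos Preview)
Your proposal is correct and follows essentially the same approach as the paper, which gives only a one-line justification (``Using Lemma \ref{lem:idemspli} and Theorem \ref{TVsplit}'') before stating the theorem. You have simply spelled out the bookkeeping that the paper leaves implicit: Lemma \ref{lem:idemspli} handles the object-level splitting, and the splitting equation \eqref{TVsplit} from Theorem \ref{thm:inv} handles the morphism-level splitting after restriction to the images of the idempotents.
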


\section{Maximal decomposition of the Turaev-Viro TQFT}\label{sec:TVHQFTmax}

In  this Section we will compare the different decompositions of the Turaev-Viro TQFT. The decomposition  obtained from the universal graduation will be the  maximal decomposition.

Let $\C$ be a spherical category, $(G,p)$ be a graduation on $\C$, $f: \grad \fd G$ the group morphism obtained form the universal property of the graduator (Proposition \ref{pro:graduation}) and  $\Sigma$ be a closed and oriented surface endowed with a triangulation $T$. For every homotopy class $x\in [M,BG]$ the vector space $\m{V}_{\C}^{(G,p)}(\Sigma)$ is the image of the idempotent $p_{\Sigma,T,x}^{(G,p)}$ and this idempotent is obtained form the vector $HTV^{(G,p)}_{\C}(\Sigma\times I,1_x)$. Using Theorem \ref{thm:decompmorph}, we have the following decomposition of the vector $HTV^{(G,p)}_{\C}(\Sigma\times I,1_x)$, for every $x\in [\Sigma, BG]$ and for every $c_0\in Col^{(G,p)}_x(T_0)$ we have:
\[
HTV_{\C}^{(G,p)}(\Sigma\times I,c_0,1_x)=\sum_{y\in F^{-1}(x)}HTV^{\grad}_{\C}(\Sigma\times I,c_0,y)\, ,
\]
where $F : [\Sigma\times I,B\grad]\fd [\Sigma\times I,BG]$ is the map induced by $f$ (Lemma \ref{lem:mapgrad}). We have shown that $\dis{Col_x^{(G,p)}(T_0)=\coprod_{y\in F^{-1}(1_x)}Col^{\grad}_{y}(T_0)}$, furthermore we have: $$[M,B\grad]_{(\Sigma,x),(\Sigma,x')}\left\{\begin{array}{cc}
   \emptyset  & \mbox{$x\not = x'$ }\\
   1_x &  \mbox{$x=x'$}\,.
 \end{array}\right.$$ It follows that:
\[
HTV^{(G,p)}(\Sigma\times I,c_0,1_x)=\sum_{y\in F_{\Sigma}^{-1}(x)}HTV^{\grad}(\Sigma,c_0,1_y)\, ,
\]
where $F_{\Sigma}$ is the restriction of $F$ to $\Sigma$. Let us  take the image of the induced idempotent, one gets: $$\m{V}(\Sigma,x)=\bigoplus_{y\in F_{\Sigma}^{-1}(x)}\m{V}^{(\grad,\cs{?})}(\Sigma,y)\, .$$

We obtain in the same way a decomposition of linear map defined by the HQFT. It follows:

\begin{theorem}\label{thm:maxsplit}
Let $\C$ be  a spherical category, $(G,p)$ be a graduation of $\C$. The Turaev-Viro HQFT obtained from the graduation $(G,p)$ is decomposed in the following way:
$$
\m{V}^{(G,p)}_{\C}(\Sigma,x)=\bigoplus_{y\in F^{-1}(x)}\m{V}^{\grad}_{\C}(\Sigma,y)\, ,
$$
for every closed surface $\Sigma$, and for every $x\in [\Sigma,BG]$, the map $F : [\Sigma,B\grad]\fd [\Sigma,BG]$ is the map obtained from the universal graduation (Lemma \ref{lem:mapgrad}).
\end{theorem}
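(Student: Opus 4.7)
The plan is to reduce everything to the already-established decomposition of the homotopical Turaev--Viro invariants (Theorem \ref{thm:decompmorph} / Corollary \ref{cor:maxHTV}) by applying it to the cylinder $\Sigma \times I$ and then passing to images of idempotents. The morphism $f : \grad \fd G$ comes from the universal property (Proposition \ref{pro:graduation}), and it induces $F : [\Sigma, B\grad] \fd [\Sigma, BG]$ via Lemma \ref{lem:mapgrad}.

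First, I would fix a triangulation $T$ of $\Sigma$ and unpack the definition of $\m{V}^{(G,p)}_{\C}(\Sigma, x)$ as the image of the idempotent $p^{(G,p)}_{\Sigma, T, x}$ acting on $V_{\C}(\Sigma, T, x)$. At the level of colorings, the partition
\[
Col^{(G,p)}_x(T) \;=\; \coprod_{y \in F^{-1}(x)} Col^{(\grad,\cs{?})}_y(T)
\]
(which follows from $\ov{F}([\cs{?}\circ c]) = [p\circ c]$ and the bijection of Proposition \ref{pro:jau}) already yields
\[
V_{\C}(\Sigma, T, x) \;=\; \bigoplus_{y \in F^{-1}(x)} V_{\C}(\Sigma, T, y).
\]

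Next, I apply Theorem \ref{thm:decompmorph} to the cylinder $\Sigma \times I$, the morphism of graduations $f : (\grad, \cs{?}) \fd (G, p)$, and each fixed boundary coloring $c_0 \in Col^{(G,p)}_x(T)$. Since $[\Sigma \times I, BG]_{(\Sigma, x),(\Sigma, x')}$ is $\{1_x\}$ when $x = x'$ and empty otherwise (and likewise for $B\grad$), the decomposition
\[
HTV^{(G,p)}_{\C}(\Sigma \times I, 1_x, c_0) \;=\; \sum_{y \in F^{-1}(x)} HTV^{(\grad,\cs{?})}_{\C}(\Sigma \times I, 1_y, c_0)
\]
translates, after running over all $c_0, c_0' \in Col^{(G,p)}_x(T)$ and assembling the matrices that define the idempotents, into the block-diagonal identity
\[
p^{(G,p)}_{\Sigma, T, x} \;=\; \bigoplus_{y \in F^{-1}(x)} p^{(\grad,\cs{?})}_{\Sigma, T, y}
\]
on $V_{\C}(\Sigma, T, x) = \bigoplus_{y} V_{\C}(\Sigma, T, y)$. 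Taking images of both sides gives the vector-space decomposition stated in the theorem.

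Finally, to extend this to the linear maps assigned to cobordisms $M : \Sigma_0 \fd \Sigma_1$ (i.e.\ to cover the HQFT structure and not just the surface part), I would repeat the same argument with $\Sigma \times I$ replaced by an arbitrary $B\grad$-cobordism representative and apply Theorem \ref{thm:decompmorph} to $HTV^{(G,p)}_{\C}(M, z, c_0, c_1)$ for each $z \in [M, BG]_{(\Sigma_0, x_0), (\Sigma_1, x_1)}$; the preimage $F^{-1}(z)$ under the induced map on homotopy classes relative to the boundary gives the matching block decomposition, compatible with the surface idempotents just computed. The main obstacle is purely bookkeeping: one must verify that the partition of colorings relative to prescribed boundary data is compatible with the morphism $f$ (which is Lemma \ref{lem:mapgrad} plus the version of Proposition \ref{pro:colbound} relative to $T_0$), and that the restriction-to-$\Sigma$ diagram for $F$ is compatible with the cylinder computation above, so that no cross-terms between distinct $y, y' \in F^{-1}(x)$ contribute.
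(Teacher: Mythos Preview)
Your proposal is correct and follows essentially the same route as the paper: apply Theorem \ref{thm:decompmorph} (via the universal morphism $f:\grad\to G$) to the cylinder $\Sigma\times I$, use that $[\Sigma\times I,B\grad]_{(\Sigma,y),(\Sigma,y')}$ is a singleton or empty to turn the sum over $F^{-1}(1_x)$ into a block-diagonal splitting of the idempotent $p^{(G,p)}_{\Sigma,T,x}$, and then pass to images; the paper likewise remarks that the cobordism maps decompose ``in the same way.'' Your write-up is in fact a bit more explicit than the paper's about the partition of colorings and the absence of cross-terms, but the argument is the same.
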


\section*{Acknowledge}
The author was supported by Grant-in-Aid for JSPS Fellows \#19.07323.

\bibliographystyle{amsplain}
\bibliography{E:/these/biblio/biblioordre}
\end{document}